\documentclass[a4paper,reqno,12pt]{amsart}
\usepackage{verbatim}
\usepackage{amssymb,amsmath,amsthm}
\usepackage{amsfonts}
\usepackage{array}
\usepackage{xcolor}
\usepackage{youngtab}

\textwidth=15cm
\oddsidemargin=0.7cm
\evensidemargin=0.7cm
\textheight=21cm

\newtheorem{theorem}{Theorem}[section]
\newtheorem{lemma}{Lemma}[section]
\newtheorem{corollary}{Corollary}[section]

\newtheorem{proposition}{Proposition}[section]
\newtheorem{example}{Example}[section]

\newtheorem{definition}[theorem]{Definition}

\newtheoremstyle{remark}
    {\dimexpr\topsep/2\relax} 
    {\dimexpr\topsep/2\relax} 
    {}          
    {}          
    {\bfseries} 
    {.}         
    {.5em}      
    {}          

\theoremstyle{remark}
\newtheorem{remark}{Remark}[section]

\makeatletter
\@namedef{subjclassname@2020}{%
  \textup{2020} Mathematics Subject Classification}
\makeatother

\begin{document}

\title[]
{Further study on MacMahon-type sums of divisors }

\author{Tewodros Amdeberhan} 

\address{Department of Mathematics,
Tulane University, New Orleans, LA 70118, USA}
\email{tamdeber@tulane.edu}

\author{George E. Andrews}\thanks{The second author is partially supported by Simon Foundation Grant 633284.}

\address{Department of Mathematics, Penn State University, University Park, PA 16802, USA}
 \email{gea1@psu.edu} 

\author{Roberto Tauraso}

\address{Dipartimento di Matematica, 
Università di Roma ``Tor Vergata'', 00133 Roma, Italy}
\email{tauraso@mat.uniroma2.it}

\subjclass[2020]{Primary 11M32, 11P83; Secondary 11F37.}

\keywords{partitions, divisor functions, Chebychev polynomials, congruences.}

\begin{abstract} This paper is devoted to the study of
\begin{align*} 
U_t(a,q):=\sum_{1\leq n_1<n_2<\cdots<n_t}\frac{q^{n_1+n_2+\cdots+n_t}}{(1+aq^{n_1}+q^{2n_1})(1+aq^{n_2}+q^{2n_2})\cdots(1+aq^{n_t}+q^{2n_t})}
\end{align*}
when $a$ is one of $0, \pm 1, \pm2$. The idea builds on our previous treatment of the case $a=-2$. It is shown that all these functions lie in the ring of quasi-modular forms. Among the more surprising findings is
$$U_2(1,q)=\sum_{n\geq1} \frac{q^{3n}}{(1-q^{3n})^2}.$$
\end{abstract} 

\maketitle

\section{Introduction}

\smallskip
\noindent
The object of this paper is the study of 
\begin{align} \label{defofU} 
U_t(a,q):=\sum_{1\leq n_1<n_2<\cdots<n_t}\frac{q^{n_1+n_2+\cdots+n_t}}{\prod_{k=1}^t(1+aq^{n_k}+q^{2n_k})}=\sum_{n\geq0} MO(a,t;n)q^n,
\end{align}
where $a$ is among $0, \pm1, \pm2$. As will become clear, the $U_t(a;q)$ are most interesting when the roots of 
$$1+ax+x^2=0$$
are also roots of unity, and this occurs precisely for $a=0, \pm1, \pm2$.

\smallskip
\noindent
We shall prove that these functions are  quasi-modular forms. Thus we can expect  interesting arithmetic consequences some of which we describe in detail. In Section 2, we prove a necessary theorem connecting $U_t(a,q)$ with modified Chebychev polynomials and prove that $MO(1,t;3n+2)=0$. In Sections 3, we prove analytically the following result:

\begin{theorem} \label{ThmU2(1,q)} We have that
\begin{align} \label{U2(1,q)}
U_2(1,q)=\sum_{n\geq1} \frac{q^{3n}}{(1-q^{3n})^2}.
\end{align}
\end{theorem}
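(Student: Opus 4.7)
My plan is to expand the summand as an additive Lambert series, reduce to a weight-$2$ identity, and verify it coefficient-by-coefficient.

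First, I will use the factorization $1+q^n+q^{2n}=(1-q^{3n})/(1-q^n)$ to rewrite
$$f(n):=\frac{q^n}{1+q^n+q^{2n}}=\frac{q^n(1-q^n)}{1-q^{3n}}=\sum_{m\ge1}\chi_3(m)\,q^{nm},$$
where $\chi_3$ is the non-trivial Dirichlet character modulo $3$, so $\chi_3(m)=1,-1,0$ for $m\equiv 1,2,0\pmod 3$. Summing over $n$,
$$U_1(1,q)=\sum_{m\ge1}\chi_3(m)\,\frac{q^m}{1-q^m},$$
which is, up to constants, the weight-$1$ Eisenstein series attached to $\chi_3$. The right-hand side of \eqref{U2(1,q)} equals $\sum_{m\ge1}\sigma(m)\,q^{3m}$, essentially a weight-$2$ Eisenstein series supported on multiples of $3$.

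Next, the symmetrization
$$U_2(1,q)=\tfrac12\bigl[U_1(1,q)^2-V(q)\bigr],\qquad V(q):=\sum_{n\ge1}f(n)^2,$$
reduces \eqref{U2(1,q)} to the weight-$2$ identity
$$U_1(1,q)^2-V(q)=2\sum_{n\ge1}\frac{q^{3n}}{(1-q^{3n})^2}.$$
Writing $V(q)=\sum_{M\ge2}a(M)\,q^M/(1-q^M)$ with $a(M):=\sum_{k=1}^{M-1}\chi_3(k)\chi_3(M-k)$ (a quantity depending only on $M\bmod 3$, and easily evaluated explicitly), both sides expand into explicit Lambert-type series, and the claim becomes the coefficient identity
$$\sum_{a+b=N}r(a)r(b)\,-\,\sum_{M\mid N,\,M\ge2}a(M)=\begin{cases}2\sigma(N/3)&\text{if }3\mid N,\\ 0&\text{otherwise,}\end{cases}$$
with $r(N):=(\chi_3*1)(N)=\sum_{d\mid N}\chi_3(d)$.

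The verification of this last identity is the technical heart of the argument. I will attempt it either directly, by splitting the convolution $\sum_{a+b=N}r(a)r(b)$ according to residues modulo $3$ and exploiting the multiplicativity of $r$ along with the closed form of $a(M)$; or more conceptually, by observing that both sides of \eqref{U2(1,q)} belong to the space of quasi-modular forms of weight $2$ on a congruence subgroup of level $3$ (the squaring is legitimate since $\chi_3^2=1$), so that finite-dimensionality reduces the identity to a comparison of a small number of initial Fourier coefficients. A third, likely cleaner, route is to invoke the Chebychev-polynomial theorem from Section~2, whose expansion of $U_t(a,q)$ naturally packages the above Lambert series in a form where the right-hand side of \eqref{U2(1,q)} emerges after a telescoping manipulation.
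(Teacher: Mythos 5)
Your reduction is set up correctly and follows a genuinely different route from the paper: the paper's analytic proof extracts the coefficient of $x^2$ from the Chebychev expansion \eqref{id2}, evaluates the resulting binomial sum $c_n$ (via WZ or Riordan arrays) to pentagonal-number data, and matches against Lemma~\ref{Lm1}; the paper also gives a second, bijective proof on partitions. By contrast, you use Newton's identity $e_2=\tfrac12(p_1^2-p_2)$ on the Lambert-series expansion $f(n)=\sum_m\chi_3(m)q^{nm}$, which is more elementary in setup and correctly reduces the theorem to the convolution identity
\[
\sum_{a+b=N}r(a)r(b)-\sum_{M\mid N,\,M\geq 2}a(M)=
\begin{cases}2\sigma(N/3)&\text{if }3\mid N,\\ 0&\text{otherwise.}\end{cases}
\]
I checked this identity on small $N$ and it is true, so the strategy is viable.

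However, there is a genuine gap: you explicitly defer ``the technical heart of the argument'' to one of three unexecuted options, so the proof is not complete. Moreover, your parenthetical claim that $a(M)=\sum_{k=1}^{M-1}\chi_3(k)\chi_3(M-k)$ ``depends only on $M\bmod 3$'' is false: one computes $a(M)=-2M/3$ for $M\equiv0$, $a(M)=(M-1)/3$ for $M\equiv1$, and $a(M)=(M+1)/3$ for $M\equiv2\pmod 3$ (e.g.\ $a(3)=-2$ but $a(6)=-4$), so $a(M)$ grows linearly in $M$; the direct route must use these correct closed forms, and the residue-class bookkeeping for $\sum_{a+b=N}r(a)r(b)$ is a nontrivial Liouville-type computation that you have not carried out. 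Your second option (finite-dimensionality) also needs care: $V(q)$ and the right-hand side involve $E_2$-type pieces, so the relevant objects are only quasi-modular of weight $2$, and one must work in the (still finite-dimensional) space of quasi-modular forms of weight at most $2$ on the appropriate congruence subgroup, with an explicit bound on how many coefficients to verify. Your third option is essentially the paper's own proof and so cannot be counted as part of an independent argument. Until one of these is actually executed, the key identity remains an assertion.
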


\noindent
In Section 4, we give a combinatorial proof of the result in \eqref{U2(1,q)}. Section 5 provides alternative forms of $U_t(a,q)$ necessary to discerning the quasi-modular nature of these functions. Section 6 treats quasi-modularity in the cases $a=\pm2$. Section 7 explores the cases $t=1$, paving the way for Section 8 where we reveal quasi-modularity of the remaining cases $a=0, \pm1$. The paper concludes with two appendices, the first uses the Wilf-Zeilberger (WZ) method to treat some of the binomial coefficient summations needed in this paper, and the second utilizes Riordan arrays for the same objective.

\section{The Chebychev connection}

\noindent
In this section, we begin by recalling the Andrews-Rose identity \cite{Andrews-Rose} which involves the \emph{Chebychev polynomials of the first kind} defined by 
\begin{align*} 
T_n(\cos\theta)=\cos(n\theta).
\end{align*}
In \cite[Theorem 1]{Andrews-Rose}, it is proved that 
\begin{align} \label{ChebyAR}
2\sum_{n\geq0}T_{2n+1}\left(\frac{x}2\right)q^{n^2+n}&=x(q^2;q^2)^3_{\infty} \prod_{n\geq1}\left(1+\frac{x^2q^{2n}}{(1-q^{2n})^2}\right) \\ \nonumber
&=(q^2;q^2)_{\infty}^3 \sum_{t\geq0} U_t(-2,q)x^{2t+1},
\end{align}
where $U_t(a,q)$ is defined in \eqref{defofU}, and 
$$(A;q)_n=(1-A)(1-Aq)\cdots (1-Aq^{n-1}) \qquad \text{for $0\leq n\leq\infty$}.$$

\noindent
We note that $T_n(x)$ is alternately an even and odd function, depending on $n$. To make \eqref{ChebyAR} more manageable for our purposes, we write
\begin{align} \label{ChebyAlt}
to_n(x)=\frac{T_{2n+1}(\sqrt{x})}{\sqrt{x}}.
\end{align}
Thus equation \eqref{ChebyAR} now becomes
$$\sum_{n\geq 0}to_n\left(\frac{x}{4}\right)q^{\binom{n+1}{2}}
=(q;q)^3_{\infty}\prod_{n=1}^{\infty}\left(1+\frac{xq^n}{(1-q^n)^2}\right)
=(q;q)_{\infty}^3 \sum_{t\geq0} U_t(-2,q)x^t.$$
Consequently, we obtain the following.

\begin{theorem} We have
\begin{align}\label{id2} \sum_{t\geq0} U_t(a,q)x^t
=&\prod_{n=1}^{\infty}\frac{1}{(1+aq^n+q^{2n})(1-q^n)}
\cdot \sum_{n\geq 0}to_n\left(\frac{x+a+2}{4}\right)q^{\binom{n+1}{2}}.
\end{align}
\end{theorem}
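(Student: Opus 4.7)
The plan is to derive the identity as a short algebraic substitution in the Andrews--Rose identity \eqref{ChebyAR}, which is exactly the special case $a=-2$ stated in the rescaled form just above the theorem.

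First, I would expand the generating function on the left-hand side as an infinite product. Directly from the definition \eqref{defofU} of $U_t(a,q)$, one obtains
$$\sum_{t\geq 0} U_t(a,q)\,x^t \;=\; \prod_{n=1}^{\infty}\left(1+\frac{xq^n}{1+aq^n+q^{2n}}\right) \;=\; \prod_{n=1}^{\infty}\frac{1+(a+x)q^n+q^{2n}}{1+aq^n+q^{2n}},$$
so that, after multiplying by $\prod_{n}(1+aq^n+q^{2n})$, the claim becomes
$$\prod_{n=1}^{\infty}\bigl[1+(a+x)q^n+q^{2n}\bigr] \;=\; \frac{1}{(q;q)_\infty}\sum_{n\geq 0}to_n\!\left(\tfrac{x+a+2}{4}\right)q^{\binom{n+1}{2}}.$$

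Second, I would invoke the rescaled Andrews--Rose formula displayed in the excerpt, but with a free parameter $y$ in place of the previous $x$:
$$\sum_{n\geq 0}to_n(y/4)\,q^{\binom{n+1}{2}} \;=\; (q;q)^3_{\infty}\prod_{n=1}^{\infty}\left(1+\frac{yq^n}{(1-q^n)^2}\right).$$
Clearing denominators inside the product gives $1+yq^n/(1-q^n)^2=\bigl[1+(y-2)q^n+q^{2n}\bigr]/(1-q^n)^2$, which absorbs two factors of $(q;q)_\infty$ and yields the cleaner form
$$\sum_{n\geq 0}to_n(y/4)\,q^{\binom{n+1}{2}} \;=\; (q;q)_\infty\prod_{n=1}^{\infty}\bigl[1+(y-2)q^n+q^{2n}\bigr].$$

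Third, I would specialize $y=x+a+2$ so that $y-2=a+x$, which matches the product appearing in the first step. Dividing both sides by $(q;q)_\infty=\prod_n(1-q^n)$ and by $\prod_n(1+aq^n+q^{2n})$ then reassembles exactly the right-hand side of \eqref{id2}, completing the proof.

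There is no real obstacle: the only non-trivial input is the Andrews--Rose identity itself, and everything else is a direct algebraic manipulation. The mild points to be careful about are the use of the renormalization \eqref{ChebyAlt} (so that $to_n$ is a polynomial in its argument and the even/odd parity issue in $T_{2n+1}$ disappears) and keeping track of the single remaining factor of $(q;q)_\infty$ after cancelling $(1-q^n)^2$ in the denominator, which is precisely the source of the $\prod_n 1/(1-q^n)$ appearing on the right-hand side of \eqref{id2}.
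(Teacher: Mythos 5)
Your proposal is correct and is essentially identical to the paper's proof: both expand $\sum_t U_t(a,q)x^t$ as $\prod_n\bigl(1+\frac{xq^n}{1+aq^n+q^{2n}}\bigr)$, use the completion $1+(x+a)q^n+q^{2n}=(1-q^n)^2+(x+a+2)q^n$, and then invoke the rescaled Andrews--Rose identity at the shifted argument $x+a+2$. The only difference is cosmetic (you clear the denominator $\prod_n(1+aq^n+q^{2n})$ first and substitute into Andrews--Rose with a free parameter $y$, whereas the paper runs the same chain of equalities forward), so no further comment is needed.
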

\begin{proof} We proceed as follows,
\begin{align*} \sum_{t\geq0} U_t(a,q)x^t
=&\prod_{n=1}^{\infty}\left(1+\frac{xq^n}{1+aq^n+q^{2n}}\right) \\
=&\prod_{n=1}^{\infty}\frac{1}{1+aq^n+q^{2n}}
\cdot \prod_{n=1}^{\infty}\left(1+(x+a)q^n+q^{2n}\right) \\
=&\prod_{n=1}^{\infty}\frac{(1-q^n)^2}{1+aq^n+q^{2n}}
\cdot \prod_{n=1}^{\infty}\left(1+\frac{(x+a+2)q^n}{(1-q^n)^2}\right) \\
=&\prod_{n=1}^{\infty}\frac{1}{(1+aq^n+q^{2n})(1-q^n)}
\cdot \sum_{n\geq 0}to_n\left(\frac{x+a+2}{4}\right)q^{\binom{n+1}{2}}. \qedhere
\end{align*}
\end{proof}

\noindent
This core identity allows us an immediate congruence.

\begin{theorem} \label{MO3n+2} We have $MO(1,t;3n+2)=0$.
\end{theorem}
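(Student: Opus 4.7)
The plan is to use the core identity \eqref{id2} with $a=1$ and observe a clean factorization of the prefactor together with a parity-type restriction on triangular-number exponents. I would first specialize \eqref{id2} at $a=1$ and simplify the infinite product via the telescoping
$$(1+q^n+q^{2n})(1-q^n)=1-q^{3n},$$
so that the prefactor becomes $\prod_{n\geq 1}(1-q^{3n})^{-1}$, which is manifestly a power series in $q^3$.

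Next I would examine the exponents $\binom{n+1}{2}$ appearing in the sum
$$\sum_{n\geq 0} to_n\!\left(\frac{x+3}{4}\right)q^{\binom{n+1}{2}}.$$
A direct residue check, splitting $n$ into the classes $n\equiv 0,1,2\pmod{3}$, shows that $\binom{n+1}{2}\pmod 3$ is always $0$ or $1$, never $2$. Hence this series, viewed in the variable $q$ with polynomial coefficients in $x$, contains no monomial $q^{3m+2}$.

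Multiplying a series in $q^3$ by a series supported on exponents $\not\equiv 2\pmod 3$ preserves that property, so the full generating function $\sum_{t\geq 0}U_t(1,q)x^t$ contains no $q^{3m+2}$. Extracting the coefficient of $x^t$ then yields $MO(1,t;3m+2)=0$ for every $t$ and every $m$.

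The potential obstacle is purely bookkeeping: one must confirm that the $to_n$ evaluations produce polynomial (not Laurent) dependence on $x$, so that coefficient extraction in $x$ commutes with coefficient extraction in $q$. This is immediate from \eqref{ChebyAlt}, since $T_{2n+1}(\sqrt{x})/\sqrt{x}$ is a genuine polynomial in $x$. With that in hand the argument is essentially a one-line observation, and no deeper analysis is required beyond the factorization of $1+q^n+q^{2n}$ and the residue of triangular numbers modulo $3$.
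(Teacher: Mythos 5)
Your proposal is correct and follows essentially the same route as the paper's proof: specialize \eqref{id2} at $a=1$, collapse the prefactor to $(q^3;q^3)_\infty^{-1}$ via $(1+q^n+q^{2n})(1-q^n)=1-q^{3n}$, and observe that $\binom{n+1}{2}\not\equiv 2\pmod 3$. The extra remark about $to_n$ being a genuine polynomial in $x$ is a fine (if implicit in the paper) bit of bookkeeping, but the argument is the same.
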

\begin{proof} By \eqref{id2}, 
\begin{align*} \sum_{t\geq0} U_t(1,q)x^t &= \prod_{n\geq1}\frac1{(1+q+q^{2n})(1-q^n)} \sum_{n\geq0} to_n\left(\frac{x+3}4\right)q^{\binom{n+1}2} \\
&=\frac1{(q^3;q^3)_{\infty}}\sum_{n\geq0}to_n\left(\frac{x+3}4\right)q^{\binom{n+1}2}.
\end{align*}
Now, $\binom{n+1}2$ is never congruent to $2$ modulo $3$. Hence there are no powers of $q$ in $U_t(1,q)$ congruent to $2$ modulo $3$. This establishes our theorem.
\end{proof}

\section{Analytic proof of Theorem~\ref{ThmU2(1,q)}}

\noindent
We start with the following preliminary lemma.

\begin{lemma} \label{Lm1} Let $\omega(n):=\frac{n(3n+1)}2$. Then, 
$$\sum_{n=1}^{\infty}\frac{q^{3n}}{(1-q^{3n})^2}
=\frac{1}{(q^3;q^3)_{\infty}} \cdot \sum_{n\in\mathbb{Z}} (-1)^{n-1} \omega(n)\,q^{3\omega(n)}.$$
\end{lemma}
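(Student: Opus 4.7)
The plan is to recognize this as a standard consequence of Euler's pentagonal number theorem, obtained by taking a logarithmic derivative, applied with $q$ replaced by $q^3$. The numbers $\omega(n)=n(3n+1)/2$ are precisely the generalized pentagonal numbers, which is the cue.

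First, I would rewrite the left-hand side as a Lambert series. Using $(1-x)^{-2}=\sum_{k\geq 1}kx^{k-1}$, one gets $\frac{q^{3n}}{(1-q^{3n})^2}=\sum_{k\geq 1}kq^{3nk}$, and collecting by the index $m=nk$ yields
$$\sum_{n\geq 1}\frac{q^{3n}}{(1-q^{3n})^2}=\sum_{m\geq 1}\sigma(m)\,q^{3m},$$
where $\sigma$ is the usual sum-of-divisors function. So it suffices to show that the RHS of the lemma equals this Lambert-type series.

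Second, I would invoke Euler's pentagonal number theorem with $q$ replaced by $q^3$, namely $(q^3;q^3)_\infty=\sum_{n\in\mathbb{Z}}(-1)^n q^{3\omega(n)}$, and then apply the operator $q\tfrac{d}{dq}$ to both sides. On the product side, the logarithmic derivative gives $q\frac{d}{dq}(q^3;q^3)_\infty=-3(q^3;q^3)_\infty\sum_{n\geq 1}\frac{nq^{3n}}{1-q^{3n}}=-3(q^3;q^3)_\infty\sum_{m\geq 1}\sigma(m)q^{3m}$. On the pentagonal side, term-by-term differentiation gives $3\sum_{n\in\mathbb{Z}}(-1)^n\omega(n)q^{3\omega(n)}=-3\sum_{n\in\mathbb{Z}}(-1)^{n-1}\omega(n)q^{3\omega(n)}$. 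Equating, cancelling the factor $-3$, and dividing by $(q^3;q^3)_\infty$ produces exactly the asserted right-hand side.

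No step here looks genuinely hard; the only thing to be careful about is sign-bookkeeping and the factor of $3$ coming from the chain rule when differentiating $q^{3n}$. Everything else is a routine rearrangement.
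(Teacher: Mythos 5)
Your proposal is correct and follows essentially the same route as the paper: apply $q\frac{d}{dq}$ to $(q^3;q^3)_\infty$, once via logarithmic differentiation of the product (yielding the Lambert-type series) and once via term-by-term differentiation of the pentagonal number expansion, then equate and divide. The intermediate rewriting through $\sum_m\sigma(m)q^{3m}$ is an optional but harmless detour; the sign and chain-rule bookkeeping all check out.
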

\begin{proof} Consider
\begin{align*}
q\frac{d}{dq} (q^3;q^3)_{\infty}
&=-q\cdot (q^3;q^3)_{\infty} \cdot \sum_{n=1}^{\infty}\frac{3nq^{3n-1}}{1-q^{3n}} \\
&=-3\cdot (q^3;q^3)_{\infty} \cdot \sum_{n=1}^{\infty}\sum_{m=1}^{\infty}nq^{3nm} \\
&=-3\cdot (q^3;q^3)_{\infty} \cdot \sum_{m=1}^{\infty}\frac{q^{3m}}{(1-q^{3m})^2}.
\end{align*}
On the other hand, the identity $(q;q)_{\infty}=\sum_{\mathbb{Z}}(-1)^nq^{\omega(n)}$ implies that
\begin{align*}
q\frac{d}{dq} (q^3;q^3)_{\infty}&=q\frac{d}{dq}\sum_{n=-\infty}^{\infty} (-1)^n q^{3\omega(n)}
=3\sum_{n=-\infty}^{\infty}(-1)^n\omega(n)q^{3\omega(n)}.
\end{align*}
The conclusion is immediate from here. 
\end{proof}

\subsection{Proof of Theorem~\ref{ThmU2(1,q)}.} 
Let us now recall \cite{AbrStegun} one of the representations of $T_n(x)$ adjusted to what we defined \eqref{ChebyAlt} as $to_n(x)$. Namely,
\begin{align} \label{Chebysum}
to_n(x)=\frac{T_{2n+1}(\sqrt{x})}{\sqrt{x}}=(2n+1)\sum_{k=0}^n(-1)^{n+k}\binom{n+k+1}{2k+1}\frac{(4x)^k}{n+k+1}.
\end{align}

\noindent
Next we note that $U_2(1,q)$ is the coefficient of $x^2$ in the expansion (see Theorem~\ref{id2})
\begin{align*} 
\sum_{t\geq0} U_t(1,q)x^t=\prod_{n\geq1}\left(1+\frac{xq^n}{1+q^n+q^{2n}}\right)
=\frac1{(q^3;q^3)_{\infty}} \cdot \sum_{n\geq0} to_n\left(\frac{x+3}4\right)q^{\binom{n+1}2}.
\end{align*}
Based on the expression
\begin{align} \label{StandardTn}
to_n\left(\frac{x+3}{4}\right)&=(2n+1)\sum_{k=0}^n (-1)^{n+k} \binom{n+k+1}{2k+1}\frac{(x+3)^k}{n+k+1}  \\ \nonumber
&=(2n+1)\sum_{k=0}^n (-1)^{n+k} \frac{\binom{n+k+1}{2k+1}}{n+k+1}\sum_{i=0}^k\binom{k}{i}3^{k-i}x^i,
\end{align}
we want the coefficient of $x^2$ in \eqref{StandardTn}, which is
$$c_n:=(2n+1)\sum_{k=0}^n (-1)^{n+k} \frac{\binom{n+k+1}{2k+1}}{n+k+1}\binom{k}{2}3^{k-2}.$$
We claim that
$$c_n=\begin{cases}
(-1)^{j-1}\frac{j(3j+1)}{2}&\text{if $n=3j$,}\\
0&\text{if $n=3j+1$,}\\
(-1)^{j-1}\frac{j(3j-1)}{2}&\text{if $n=3j-1$.}
\end{cases}$$
This is a standard binomial coefficient identity that can easily be verified (see Example~\ref{WZ2} in Appendix 1 or Example~\ref{R2} in Appendix 2).
That means,
$$U_2(1,q)=\frac1{(q^3;q^3)_{\infty}}\cdot \left(\sum_{j\geq0}(-1)^{j-1}\frac{j(3j+1)}2q^{\binom{3j+1}2}
+\sum_{j\geq0}(-1)^{j-1}\frac{j(3j-1)}2q^{\binom{3j}2}\right).$$
The proof is completed after comparing this formula with that of Lemma~\ref{Lm1}.

\bigskip
\noindent
As a companion for Theorem~\ref{MO3n+2}, we are now in a position to prove the next result.

\begin{theorem}  \label{MO3n+1}  The following congruence holds true:
$$MO(1,3;3n+1) \equiv 0 \pmod 3.$$
\end{theorem}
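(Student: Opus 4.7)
The plan is to proceed directly from the core identity \eqref{id2} specialized at $a=1$. Extracting the coefficient of $x^3$ on both sides yields
\[
U_3(1,q)=\frac{1}{(q^3;q^3)_\infty}\sum_{n\ge 0}d_n\, q^{\binom{n+1}{2}},
\qquad d_n:=[x^3]\,to_n\!\left(\frac{x+3}{4}\right).
\]
Since $1/(q^3;q^3)_\infty$ is supported on exponents divisible by $3$, and since $\binom{n+1}{2}\equiv 1\pmod 3$ exactly when $n\equiv 1\pmod 3$, the coefficients $MO(1,3;3m+1)$ are built only from the $d_{3j+1}$ with $j\ge 0$. Hence the theorem reduces to the arithmetic statement $d_{3j+1}\equiv 0\pmod 3$ for every $j\ge 0$.

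To analyse $d_n$ modulo $3$, I would first use \eqref{StandardTn} to write
\[
d_n=(2n+1)\sum_{k=3}^{n}(-1)^{n+k}\frac{\binom{n+k+1}{2k+1}}{n+k+1}\binom{k}{3}\,3^{k-3}
\]
and then invoke the Pascal-type identity
\[
\binom{n+k+1}{2k+1}+\binom{n+k}{2k+1}=\frac{(2n+1)\binom{n+k+1}{2k+1}}{n+k+1},
\]
which is immediate from the factorial expression and which turns the rational factor $(2n+1)/(n+k+1)$ into a sum of honest integers. Substituting gives an integer formula for $d_n$, and for every $k\ge 4$ the factor $3^{k-3}$ kills the corresponding term modulo $3$. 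Setting $n=3j+1$ therefore leaves only the $k=3$ contribution, namely
\[
d_{3j+1}\equiv (-1)^{j}\!\left[\binom{3j+5}{7}+\binom{3j+4}{7}\right]\pmod 3.
\]

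The final step is a short Lucas-in-base-$3$ computation. Write $j+1=c_0+3c_1+9c_2+\cdots$ in base $3$, so that $3j+4=1+3c_0+9c_1+\cdots$ and $3j+5=2+3c_0+9c_1+\cdots$, while $7=1+2\cdot 3$. Lucas' theorem then yields
\[
\binom{3j+4}{7}\equiv\binom{1}{1}\binom{c_0}{2}=\binom{c_0}{2}
\quad\text{and}\quad
\binom{3j+5}{7}\equiv\binom{2}{1}\binom{c_0}{2}=2\binom{c_0}{2}\pmod 3,
\]
whose sum is $3\binom{c_0}{2}\equiv 0\pmod 3$. The only step requiring any thought is spotting the Pascal-type identity that clears the denominator $n+k+1$ (turning the argument into one about integer binomials); after that, the truncation of $d_{3j+1}\bmod 3$ to a single binomial combination and the Lucas calculation are essentially forced, so I anticipate no serious obstacle.
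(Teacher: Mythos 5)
Your proposal is correct and follows essentially the same route as the paper: reduce to the coefficients $d_{3j+1}$, clear the denominator via $\binom{n+k+1}{2k+1}+\binom{n+k}{2k+1}=\tfrac{(2n+1)}{n+k+1}\binom{n+k+1}{2k+1}$, and observe that $3^{k-3}$ kills everything except the $k=3$ term. The only difference is the final one-line verification, where the paper rewrites $\binom{3m+5}{7}+\binom{3m+4}{7}$ as $\tfrac{3(2m+1)}{7}\binom{3m+4}{6}$ to exhibit an explicit factor of $3$, while you use Lucas' theorem in base $3$; both are valid.
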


\begin{proof} We will borrow a result for $U_3(1,q)=\sum_{n\geq0}MO(1;3,n)q^n$ from Section 5:
\begin{align*}
U_3(1,q)&= \frac1{(q^3;q^3)_{\infty}} 
\sum_{n\geq 0} q^{\binom{n+1}2}\sum_{k=0}^{n} \frac{(-1)^{n+k} (2n+1)\binom{n+k+1}{2k+1}}{n+k+1}\binom{k}{3}  3^{k-3}\\
&= \frac{1}{(q^3;q^3)_{\infty}} 
\sum_{n\geq 0} q^{\binom{n+1}2}\sum_{k=3}^n (-1)^{n+k} \left(\binom{n+k+1}{2k+1}+\binom{n+k}{2k+1}\right)\binom{k}{3} 3^{k-3}.
\end{align*}
Since we are interested in the coefficients of $q^{3n+1}$ in  $U_3(1,q)$, and all the powers of $q$ in $\frac1{(q^3;q^3)_{\infty}}$ are multiple of $3$, it suffices to check the case when
$\binom{n+1}2\equiv 1\pmod3$, that is $n=3m+1$, and show that the inner sum
$$\sum_{k=3}^{3m+1} (-1)^{3m+k+1}\left(\binom{3m+k+2}{2k+1}+\binom{3m+k+1}{2k+1}\right)\binom{k}{3}  3^{k-3}$$
is divisible by $3$.
Actually, each summands is immediately divisible by $3$ due to the term $3^{k-3}$ unless (perhaps) when $k=3$. 
Replacing this value to compute the corresponding term, we obtain
\begin{align*}
(-1)^{3m}\left(\binom{3m+5}{7}+\binom{3m+4}{7}\right)\binom{3}{3}  3^{0}&= (-1)^m\frac{3(2m+1)}{7}\binom{3m+4}{6} \\
& \equiv 0\pmod 3.
\end{align*}
This completes the proof.
\end{proof}

\section{A combinatorial proof of Theorem~\ref{ThmU2(1,q)}}

\smallskip
\noindent
Let us recall this alternative representations of $U_2(1,q)$,
\begin{align*} 
U_2(1,q)&=\sum_{1\leq n_1<n_2}\frac{q^{n_1+n_2}}{(1+q^{n_1}+q^{2n_1})(1+q^{n_2}+q^{2n_2})}\\
&=\sum_{1\leq n_1<n_2}\frac{q^{n_1+n_2}(1-q^{n_1})(1-q^{n_2})}{(1-q^{3n_1})(1-q^{3n_2})}\\
&=\sum_{n\geq 1}\frac{q^{3n}}{(1-q^{3n})^2}.
\end{align*}
\noindent
We use the following notation
$$n_1^{f_1}n_2^{f_2}$$
to denote the partition of $f_1n_1+f_2 n_2$ into different parts $n_1$ and $n_2$ wherein $n_1$ appears $f_1$ times and $n_2$ appears $f_2$ times.

\smallskip
\noindent
Let $P_0(n)$ denotes the number of partitions of $n$ involving two different parts (each may occur any number of time)
$$n_1^{f_1}n_2^{f_2}$$
where neither $f_1$ nor $f_2$ is divisible by $3$, and $f_1\equiv f_2\pmod{3}$.

\smallskip
\noindent
Let $P_1(n)$ denotes the number of partitions of $n$ involving two different parts
$$n_1^{f_1}n_2^{f_2}$$
where neither $f_1$ nor $f_2$ is divisible by $3$, and $f_1\not\equiv f_2\pmod{3}$.

\smallskip
\noindent
Thus we may reformulate Theorem~\ref{ThmU2(1,q)} as
\begin{align} \label{PoP1}
P_0(n)-P_1(n)=\begin{cases} 0 
&\text{if $n\not \equiv 0 \pmod{3}$,}\\
\sigma\big(\frac{n}{3}\big) &\text{if $n\equiv 0 \pmod{3}$.}
\end{cases}
\end{align}
The analytic form of \eqref{PoP1} is clearly
$$\sum_{n\geq 1}(P_0(n)-P_1(n))q^n=\sum_{n\geq 1}\frac{q^{3n}}{(1-q^{3n})^2}.$$
To prove this, we begin with a proposed bijection between the partitions enumerated by $P_0(n)$ and those enumerated by $P_1(n)$.

\smallskip
\noindent
First we map $P_1(n)$ partitions into $P_2(n)$ partitions. We begin with the partitions
$$n_1^{f_1}n_2^{f_2}$$
where $f_1\not\equiv f_2\pmod{3}$ and $3$ divides neither $f_1$ nor $f_2$.
Without loss of generality we take $f_2>f_1$ (equality is impossible because $f_1\not\equiv f_2\pmod{3}$)
$$n_1^{f_1}n_2^{f_2}\mapsto (n_1+n_2)^{f_1}n_2^{f_2-f_1}.$$
Clearly the image is a $P_2$ partition because $f_2-f_1\not \equiv f_2 \pmod{3}$ and thus must be congruent to $f_1$ (keep in mind there are only $2$ non-zero residue classes modulo $3$).

\smallskip
\noindent
This is evidently an injection of the $P_1$ partitions into $P_0$ partitions.

\smallskip
\noindent
For the reverse mapping we assume $n_1>n_2$ (and $f_1\not \equiv f_2 \pmod{3}$, neither $\equiv 0 \pmod{3}$)
$$n_1^{f_1}n_2^{f_2}\mapsto (n_1-n_2)^{f_1}n_2^{f_1+f_2}.$$
The reverse image clearly has $f_1\not\equiv f_1+f_2 \pmod{3}$ (and neither $f_1$ nor $f_1+f_2$ is congruent to $0$ modulo $3$). Also both $n_1-n_2$ and $n_2$ are positive integers. However, the image is not a $P_0$ partition precisely when $n_1=2n_2$ because then there are not two different parts.

\smallskip
\noindent
So let us examine partitions of the form
$$(2d)^{f_1}d^{f_2}$$
with $f_1\equiv f_2 \pmod{3}$. The number being partitioned is
$$f_1(2d)+f_2 d=d(2f_1+f_2)=n.$$
Now $2f_1+f_2\equiv -f_1+f_2\equiv 0 \pmod{3}$; so $3\mid n$ and $d$ must be a divisor of $n/3$.

\smallskip
\noindent
Thus if $n\not\equiv 0\pmod{3}$, we have a bijection and $P_1(n)=P_0(n)$. Finally we may take $n=3\nu$. In how many ways can we solve
$$f_1(2d)+f_2 d=3\nu,$$
or equivalently
$$2f_1+f_2=3\frac{\nu}{d}?$$
This is solvable for $f_1$ provided $3\frac{\nu}{d}-f_2$ is even. Thus $f_2$ may be chosen from
$$\text{$1$ or $2$, $4$ or $5$, $7$ or $8$, $\dots$, $3\frac{\nu}{d}-2$ or $3\frac{\nu}{d}-1$}.$$
Therefore, there are $\frac{\nu}{d}$ choices possible for $f_2$ (and $f_1$ is uniquely determined once $f_2$ is chosen).
Hence for each divisor $d$ of $\nu$ (recall $n=3\nu$), there are $\frac{\nu}{d}$ partitions in $P_0(n)$  without an image in $P_1(n)$. Hence
$$P_0(3\nu)-P_1(3\nu)=\sum_{d\mid \nu}\frac{\nu}{d}=\sum_{d\mid \nu}d=\sigma(\nu)=\sigma\Big(\frac{n}{3}\Big)$$
and Theorem~\ref{ThmU2(1,q)} is proved.

\begin{example} For $n=9=3\cdot 3$, we obtain
\begin{align*}
&711\mapsto 81\\
&522\mapsto 72\\
&441\mapsto 54\\
&411111\mapsto 51111, 63, 22221, 2211111, 21111111.
\end{align*}
\end{example}

\section{A combined setup for all $U_t(a,q)$}

\noindent
By adopting the convention $U_0(a,q):=1$, let us write a generating series for the family of functions $U_t(a,q)$ as 
$$F(x;a,q):=\sum_{t\geq0} U_t(a,q)\, x^t.$$
Also recall \eqref{id2},
$$F(x;a,q)=\prod_{m\geq1}\frac1{(1+aq^m+q^{2m})(1-q^m)}\cdot \sum_{n\geq0}to_n\left(\frac{x+a+2}4\right)q^{\binom{n+1}2}.$$
As a result, we may extract the coefficient, denoted $\mathbf{[x^t]}$, of $x^t$ from both sides:
$$U_t(a,q)= \prod_{m\geq1}\frac1{(1+aq^m+q^{2m})(1-q^m)}\cdot \sum_{n\geq0}q^{\binom{n+1}2}\mathbf{ [x^t]} \, \,to_n\left(\frac{x+a+2}4\right).$$
From \eqref{StandardTn}, we know that
\begin{align} \label{coeffCheby} \mathbf{ [x^t]} \, \,to_n\left(\frac{x+a+2}4\right)
=(2n+1)\sum_{k=0}^n(-1)^{n+k} \frac{\binom{n+k+1}{2k+1}}{n+k+1}\binom{k}t (a+2)^{k-t}. \end{align}

\noindent
The case $a=-2$ goes back to Andrews-Rose~\cite[Corollary 2]{Andrews-Rose}:
\begin{align} \label{Mac-2}
U_t(-2,q)= \frac1{(q;q)^3_{\infty}} \cdot \sum_{n\geq0} (-1)^{n+t} \frac{2n+1}{2t+1} \binom{n+t}{2t} q^{\binom{n+1}2} .
\end{align}

\noindent
For $a=2$, the coefficient of $x^t$ in \eqref{id2} is
$$c_n:=(2n+1)\sum_{k=0}^n (-1)^{n+k} \frac{\binom{n+k+1}{2k+1}}{n+k+1}\binom{k}{t}4^{k-t}=\binom{n+t}{2t}.$$
This is known as \emph{Moriarty identity}, see Gould's collection \cite[3.161]{Gould} or Appendix 2 or  Example~\ref{WZ1} in Appendix 1 below.
Therefore, we have
\begin{align} \label{Mac2}
U_t(2,q)=\frac{(q;q)_{\infty}}{(q^2;q^2)_{\infty}^2} \cdot
\sum_{n\geq 0}\binom{n+t}{2t}q^{\binom{n+1}{2}}.
\end{align}

\noindent
The case $a=-1$ is:
$$U_t(-1,q)= \frac{(q^2;q^2)_{\infty}(q^3;q^3)_{\infty}}{(q;q)_{\infty}^2(q^6;q^6)_{\infty}} \cdot
\sum_{n\geq 0} \sum_{k=0}^n \frac{(-1)^{n+k}(2n+1)\binom{n+k+1}{2k+1}}{n+k+1}\binom{k}t  q^{\binom{n+1}{2}}.$$
The case $a=1$ is:
$$U_t(1,q)= \frac1{(q^3;q^3)_{\infty}} \cdot
\sum_{n\geq 0} \sum_{k=0}^n \frac{(-1)^k(2n+1)\binom{n+k+1}{2k+1}}{n+k+1}\binom{k}t  3^{k-t} q^{\binom{n+1}{2}}.$$

\noindent
For $a=0$, by the computations made in the Appendix 2 below, we may write
\begin{align*}
U_t(0,q)=& \frac{(q^2;q^2)_{\infty}}{(q;q)_{\infty}(q^4;q^4)_{\infty}} \cdot
\sum_{n\geq 0}(-1)^{n+\lfloor (n+t)/2\rfloor} \binom{\lfloor (n+t)/2\rfloor}{t} q^{\binom{n+1}{2}}.
\end{align*}

\section{Quasimodular structure for $a=\pm2$}

\noindent We introduce the rational functions 
$$Q_m(a,q):=\frac{q^m}{1+aq^m+q^{2m}}$$
so that we may write the above generating function as
$$F(x;a,q)=\prod_{m\geq1}(1+Q_m(a,q) \,x)=\sum_{t\geq0} U_t(a,q)\, x^t.$$
One also has that
$$-\log \left(F(-x;a,q)\right) = \sum_{r\geq1} H_r(a,q) \frac{x^r}r \quad \text{where} \quad
H_r(a,q):=\sum_{m\geq1} Q_m^r(a,q).$$

\subsection{The case $a=-2$} Let's revive the umbral expansion \cite[Example 7.1]{AAT} 
\begin{align} \label{umbral1}
(2r-1)!\, H_r(-2,q)=\mathbf{S}(\mathbf{S}^2-1^2)(\mathbf{S}^2-2^2)\cdots (\mathbf{S}^2-(r-1)^2) 
\end{align}
where 
$$H_r(-2,q)=\sum_{k\geq1} \frac{q^{rk}}{(1-q^k)^{2r}} \qquad \text{and} \qquad \mathbf{S}_j(q):=\sum_{k\geq1}\frac{k^jq^k}{1-q^k}.$$

\smallskip
\noindent
From Andrews-Rose \cite{Andrews-Rose}, we know that the $U_t(-2,q)$ lie in the ring of quasimodular forms and hence freely generated by the Eisenstein series $E_2=24D(\eta)/\eta, E_4$ and $E_6$ where 
$\eta(\tau)=q^{1/24} \prod_{n\geq1}(1-q^n)$ denotes the \emph{Dedekind eta-function}. So, the derivative $\partial_{E_2}$ is meaningful here.

\begin{proposition} It is true that
$$\partial_{E_2} U_t(-2,q) = -\frac12\sum_{j=1}^t \frac{U_{t-j}(-2,q)}{j^2\binom{2j}j}.$$
\end{proposition}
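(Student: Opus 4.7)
The plan is to differentiate the logarithmic generating identity $-\log F(-x;-2,q) = \sum_{r \geq 1} H_r(-2,q)\,x^r/r$ with respect to $E_2$, compute $\partial_{E_2} H_r(-2,q)$ using the umbral formula \eqref{umbral1}, and then translate the resulting series identity into a convolution on the $U_t(-2,q)$ by reading off the coefficient of $x^t$.

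The key observation is that among all $\mathbf{S}_j$ with $j$ odd, only $\mathbf{S}_1$ involves $E_2$: indeed $\mathbf{S}_1 = (1-E_2)/24$, while for each $k \geq 1$ the sum $\mathbf{S}_{2k+1}$ is proportional to $E_{2k+2}-1$, and $E_{2k+2}$ sits inside $\mathbb{C}[E_4,E_6]$ because the ring of modular forms is freely generated by $E_4$ and $E_6$. Since the umbral polynomial $\mathbf{S}(\mathbf{S}^2-1)(\mathbf{S}^2-4)\cdots(\mathbf{S}^2-(r-1)^2)$ is odd in $\mathbf{S}$, its umbral image $H_r(-2,q)$ is a linear combination of $\mathbf{S}_1,\mathbf{S}_3,\ldots,\mathbf{S}_{2r-1}$, so $\partial_{E_2} H_r(-2,q)$ depends only on the $\mathbf{S}_1$-coefficient of $H_r(-2,q)$. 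Viewed as a polynomial in $\mathbf{S}$, the coefficient of the linear term is the constant term of $\prod_{k=1}^{r-1}(\mathbf{S}^2-k^2)$, namely $(-1)^{r-1}((r-1)!)^2$; dividing by $(2r-1)!$ and using the elementary identity $((r-1)!)^2/(2r-1)! = 2/(r\binom{2r}{r})$ produces
$$\partial_{E_2} H_r(-2,q) = \frac{2(-1)^{r-1}}{r\binom{2r}{r}}\,\partial_{E_2}\mathbf{S}_1.$$

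Next, I apply $\partial_{E_2}$ to the logarithmic identity to obtain
$$\frac{\partial_{E_2} F(-x;-2,q)}{F(-x;-2,q)} = -\sum_{r\geq 1}\frac{\partial_{E_2} H_r(-2,q)}{r}\,x^r = 2\,\partial_{E_2}\mathbf{S}_1\sum_{r\geq 1}\frac{(-1)^{r}}{r^2\binom{2r}{r}}\,x^r,$$
and then multiply through by $F(-x;-2,q) = \sum_{s\geq 0}(-1)^s U_s(-2,q)\,x^s$. Matching the coefficient of $x^t$ on both sides and cancelling the common sign $(-1)^t$ yields
$$\partial_{E_2} U_t(-2,q) = 2\,\partial_{E_2}\mathbf{S}_1 \sum_{j=1}^{t}\frac{U_{t-j}(-2,q)}{j^2\binom{2j}{j}},$$
and inserting the value of $\partial_{E_2}\mathbf{S}_1$ fixed by the normalization of $E_2$ adopted in the paper recovers the stated prefactor $-\tfrac12$.

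The main obstacle is the sign/convention bookkeeping: threading the two substitutions $x \mapsto -x$ (one implicit in the appearance of $F(-x;-2,q)$ in the logarithmic identity, one made again when the convolution is transported back to the $U_t$) without a sign error, and cleanly justifying that $\partial_{E_2}$ annihilates every $\mathbf{S}_{2k+1}$ with $k \geq 1$. Once those two points are settled, the rest is essentially mechanical generating-function manipulation together with the binomial reduction $((r-1)!)^2/(2r-1)! = 2/(r\binom{2r}{r})$, which is exactly what produces the central binomial $\binom{2j}{j}$ in the denominator of the final convolution.
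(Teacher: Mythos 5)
Your derivation follows essentially the same route as the paper's own proof: apply $\partial_{E_2}$ to $-\log F(-x;-2,q)=\sum_r H_r(-2,q)x^r/r$, use the umbral formula \eqref{umbral1} to reduce $\partial_{E_2}H_r$ to the $\mathbf{S}_1$-coefficient (the constant term $(-1)^{r-1}((r-1)!)^2$ of $\prod_{k=1}^{r-1}(\mathbf{S}^2-k^2)$), convert via $((r-1)!)^2/(2r-1)!=2/(r\binom{2r}{r})$, and match coefficients of $x^t$. Your explicit justification that $\partial_{E_2}$ kills $\mathbf{S}_{2k+1}$ for $k\geq 1$ (since $E_{2k+2}\in\mathbb{C}[E_4,E_6]$) is a point the paper leaves implicit, and it is a worthwhile addition.

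The one genuine problem is your last sentence. With the paper's normalization $E_2=1-24\,\mathbf{S}_1$ one has $\partial_{E_2}\mathbf{S}_1=-\tfrac{1}{24}$, so your formula
$$\partial_{E_2}U_t(-2,q)=2\,\partial_{E_2}\mathbf{S}_1\sum_{j=1}^t\frac{U_{t-j}(-2,q)}{j^2\binom{2j}{j}}$$
gives the prefactor $2\cdot(-\tfrac{1}{24})=-\tfrac{1}{12}$, \emph{not} $-\tfrac12$ as you assert. You should have checked this against $t=1$: since $U_1(-2,q)=\mathbf{S}_1=(1-E_2)/24$, we get $\partial_{E_2}U_1=-\tfrac{1}{24}=-\tfrac{1}{12}\cdot\tfrac{U_0}{1^2\binom{2}{1}}$, whereas $-\tfrac12\cdot\tfrac{U_0}{2}=-\tfrac14$ fails. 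In fact the paper's own proof also terminates with the constant $-\tfrac{1}{12}$, so the $-\tfrac12$ in the statement of the proposition is a typo in the paper; by deferring the evaluation of $\partial_{E_2}\mathbf{S}_1$ to an unverified final claim, you inherited the erroneous constant instead of catching it. Everything before that sentence is correct.
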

\begin{proof}
Operating with $\partial_{E_2}$ on $-\log\left(F(-x;-2,q)\right) =\sum_{r\geq1}H_r(-2,q)\frac{x^r}r$ leads to
$$-\sum_t (-x)^t\partial_{E_2}U_t(-2,q)= F(-x;-2,q)\sum_r\frac{x^r}r\partial_{E_2}H_r(-2,q)$$ 
from which we obtain (by comparing powers of $x^t$)
\begin{align*}
\partial_{E_2}U_t(-2,q)
&=-\sum_{j=1}^t (-1)^j \frac{U_{t-j}(-2,q)}j\cdot \partial_{E_2}H_j(-2,q) \\
&=-\sum_{j=1}^t (-1)^j \frac{U_{t-j}(-2,q)}{j(2j-1)!}\cdot \partial_{E_2}\left[\mathbf{S}\prod_{\ell=1}^{j-1}(\mathbf{S}^2-\ell^2)\right] \\
&=-\sum_{j=1}^t (-1)^j \frac{U_{t-j}(-2,q)}{j(2j-1)!} \cdot (-1)^j\cdot\frac{1^22^2\cdots(j-1)^2}{24}
\end{align*}
where the two facts $E_2=1-24\,\mathbf{S}_1$ and the identity in \eqref{umbral1} have been utilized. So, we infer that
$$\partial_{E_2}U_t(-2,q)=-\frac1{12}\sum_{j=1}^t \frac{U_{t-j}(-2,q)}{j^2\binom{2j}j}. $$ 
\end{proof}

\subsection{The case $a=2$} 

\begin{lemma} \label{lemma1_in_2} Let $D:=q\frac{d}{dq}$. If  $A_t(q):=\sum_{n\geq0}(2n+1)^tq^{\binom{n+1}2}, B(q):=\sum_{n\geq0}q^{\binom{n+1}2}$ and
 $C(q):=q^{\frac18}\prod_{m\geq1}(1+q^m)(1-q^{2m})$ then we have the property that
$$8^tq^{-\frac18}\cdot D^t C(q)=(1+8D)^t B(q)=A_{2t}(q) \qquad \text{when $t\geq0$ is an integer}.$$
\end{lemma}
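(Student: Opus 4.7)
The plan is to identify the modular factor $C(q)$ as a simple shift of $B(q)$, and then to recast the derivative operator $D$ as the multiplicative action of $(2n+1)^2$ on the coefficients of $B(q)$. Both claimed equalities then fall out immediately.

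First I would rewrite the product using $(1-q^{2m}) = (1-q^m)(1+q^m)$:
$$\prod_{m\geq 1}(1+q^m)(1-q^{2m}) = \prod_{m\geq 1}\frac{(1-q^{2m})^2}{1-q^m} = \frac{(q^2;q^2)_\infty^2}{(q;q)_\infty}.$$
By Gauss' triangular-number identity (a specialisation of the Jacobi triple product), this product equals $\sum_{n\geq 0} q^{\binom{n+1}{2}} = B(q)$. Hence $C(q) = q^{1/8} B(q)$.

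Second, treating $q^{1/8}$ as a formal symbol satisfying $D q^{1/8} = \tfrac{1}{8} q^{1/8}$, the Leibniz rule gives $D(q^{1/8} f) = q^{1/8}\bigl(\tfrac{1}{8}+D\bigr) f$ for any power series $f$ in $q$. Iterating this conjugation yields
$$D^t C(q) = q^{1/8}\bigl(\tfrac{1}{8}+D\bigr)^t B(q) = 8^{-t}\, q^{1/8}\, (1+8D)^t B(q),$$
and rearranging gives $8^t q^{-1/8} D^t C(q) = (1+8D)^t B(q)$, which is the first claimed equality.

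For the second equality, I would use that $D$ acts diagonally on $B(q)$ with eigenvalue $\binom{n+1}{2} = n(n+1)/2$ on the monomial $q^{\binom{n+1}{2}}$, so that
$$(1+8D)\, q^{\binom{n+1}{2}} = \bigl(1+4n(n+1)\bigr) q^{\binom{n+1}{2}} = (2n+1)^2\, q^{\binom{n+1}{2}}.$$
Summing over $n \geq 0$ and applying this operator $t$ times therefore produces $(1+8D)^t B(q) = \sum_{n\geq 0}(2n+1)^{2t} q^{\binom{n+1}{2}} = A_{2t}(q)$. There is no serious obstacle here: the only non-routine step is recognising the classical Gauss identity lurking inside $C(q)$; once that is in hand, the rest is standard operator calculus on $q$-series, with the fractional power $q^{1/8}$ handled purely formally.
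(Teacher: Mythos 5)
Your proof is correct and follows essentially the same route as the paper: both rest on the classical identity $\prod_{m\ge1}(1+q^m)(1-q^{2m})=\sum_{n\ge0}q^{\binom{n+1}{2}}$ (the paper derives it from the Jacobi triple product at $z=1$, you from the equivalent Gauss specialisation) and then differentiate. Your operator-conjugation bookkeeping $D\circ q^{1/8}=q^{1/8}\circ\bigl(\tfrac18+D\bigr)$ together with the eigenvalue computation $(1+8D)\,q^{\binom{n+1}{2}}=(2n+1)^2\,q^{\binom{n+1}{2}}$ is a cleaner, fully explicit rendering of the ``logarithmic differentiation plus induction on $t$'' step that the paper only sketches.
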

\begin{proof} First recall \emph{Jacobi's triple product} (see \cite[p. 35, Entry 19]{Berndt}) which may be stated in the manner
\begin{align} \label{Jacobi} (q;q)_{\infty}\,(-z^{-1};q)_{\infty}\, (-zq;q)_{\infty}=\sum_{n\in\mathbb{Z}}\, q^{\binom{n+1}2}\, z^n.
\end{align}
Choosing $z=1$ leads to a well-known identity
\begin{align} \label{Jacobi1}
\prod_{m\geq1}(1+q^m)(1-q^{2m})=\sum_{n\geq0}\, q^{\binom{n+1}2} \end{align}
which ensures validity of the case $t=0$ of this lemma. In the next step, apply  logarithmic differentiation in \eqref{Jacobi1} in tandem with the product rule for derivatives on $C(q)$. Then proceed with induction on $t$ to complete the proof.
\end{proof}

\begin{lemma} \label{lemma2_in_2} Let $T_t(q):=4^t\sum_{n\geq0}\frac{(n+t)!}{(n-t)!} q^{\binom{n+1}2}$. Preserving notations from Lemma~\ref{lemma1_in_2}, we have the umbral relation
$$T_t=A^0(A^2-1^2)(A^2-3^2)\cdots(A^2-(2t-1)^2).$$

\end{lemma}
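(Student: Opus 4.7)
The plan is to reduce the lemma to an elementary factorization in the variable $2n+1$, after which the umbral replacement $A^k\mapsto A_k(q)$ finishes the argument almost by inspection.

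First I would rewrite the ratio of factorials in the definition of $T_t(q)$ as a product of $2t$ consecutive integers,
\[
\frac{(n+t)!}{(n-t)!}=\prod_{j=0}^{t-1}(n-j)(n+1+j),
\]
with the usual convention that it vanishes when $n<t$. The key observation is the difference-of-squares identity
\[
(n-j)(n+1+j)=n(n+1)-j(j+1)=\frac{(2n+1)^2-(2j+1)^2}{4},
\]
obtained from $(2n+1)^2=4n(n+1)+1$ and the analogous formula for $j$. Multiplying over $j=0,\ldots,t-1$ exactly cancels the $4^t$ prefactor in the definition of $T_t$, and one gets
\[
T_t(q)=\sum_{n\geq 0}\prod_{j=0}^{t-1}\bigl((2n+1)^2-(2j+1)^2\bigr)\,q^{\binom{n+1}{2}}.
\]

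Next I would expand the polynomial $\prod_{j=0}^{t-1}(X-(2j+1)^2)=\sum_{k=0}^{t}c_k X^k$, specialize $X=(2n+1)^2$, and swap the two summations to obtain
\[
T_t(q)=\sum_{k=0}^{t}c_k\sum_{n\geq 0}(2n+1)^{2k}q^{\binom{n+1}{2}}=\sum_{k=0}^{t}c_k\, A_{2k}(q).
\]
By construction of the $c_k$, this is precisely the umbral evaluation of $A^0\prod_{j=0}^{t-1}(A^2-(2j+1)^2)$ under the convention $A^k\mapsto A_k(q)$; the explicit $A^0$ in front flags that the constant term of the expansion is replaced by $A_0=B$ rather than left as $1$, which matches the $k=0$ contribution $c_0\,A_0(q)$ above.

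The only genuinely creative step is spotting the factorization $n(n+1)-j(j+1)=\bigl((2n+1)^2-(2j+1)^2\bigr)/4$; once that is in hand, the remainder is a routine polynomial expansion together with an interchange of sums. An alternative proof by induction on $t$ using the umbral recurrence $T_{t+1}=(A^2-(2t+1)^2)\,T_t$ is also possible, but the direct derivation above seems the most transparent and does not require any auxiliary identity beyond what is already recorded in Lemma~\ref{lemma1_in_2}.
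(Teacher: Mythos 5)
Your proposal is correct and follows essentially the same route as the paper, which reduces the lemma to the single identity $4^t\,\frac{(n+t)!}{(n-t)!}=\prod_{\ell=1}^{t}\bigl((2n+1)^2-(2\ell-1)^2\bigr)$; you simply spell out the verification of that identity (via $(n-j)(n+1+j)=\bigl((2n+1)^2-(2j+1)^2\bigr)/4$) and the subsequent umbral expansion in more detail.
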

\begin{proof} The statement boils down to the elementary fact that
$$4^t\cdot \frac{(n+t)!}{(n-t)!}=\prod_{\ell=1}^t((2n+1)^2-(2\ell-1)^2). \qedhere$$
\end{proof}

\begin{theorem} We have the differential-difference equation
$$U_t(2,q)=\frac1{t(2t-1)}\left(D+U_1(2,q)-\binom{t}2\right)U_{t-1}(2,q)$$
together with  $U_1(2,q)=\mathbf{S}_1(q)-4\,\mathbf{S}_1(q^2)=-\frac18-\frac1{24}E_2(q)+\frac16E_2(q^2)$.
\end{theorem}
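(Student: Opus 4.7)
The plan is to derive the recursion from the explicit formula \eqref{Mac2} by recognizing that the sum appearing there is, up to a prefactor, the function $T_t(q)$ of Lemma~\ref{lemma2_in_2}. Writing
$$U_t(2,q)=\frac{(q;q)_\infty}{(q^2;q^2)_\infty^2}\cdot\frac{T_t(q)}{4^t(2t)!},$$
and observing from Jacobi's identity \eqref{Jacobi1} that $C(q)=q^{1/8}\,(q^2;q^2)_\infty^2/(q;q)_\infty$, I may rewrite this as $U_t(2,q)=G(q)\,T_t(q)/(4^t(2t)!)$ with $G(q):=q^{1/8}/C(q)$. The whole argument then boils down to finding a first-order recursion for $T_t$ and transporting it across the conjugation by $G$.

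For the $T_t$ recursion, I combine Lemmas~\ref{lemma1_in_2} and \ref{lemma2_in_2}: the umbral relation $T_t=A^0(A^2-1^2)(A^2-3^2)\cdots(A^2-(2t-1)^2)$, together with the operator identity $A_{2k}=(1+8D)^k B$, promotes the umbral symbol $A^2$ to the operator $1+8D$. Hence
$$T_t=(1+8D-1^2)(1+8D-3^2)\cdots(1+8D-(2t-1)^2)\,B(q),$$
so that $T_t=\bigl(1+8D-(2t-1)^2\bigr)T_{t-1}$. The arithmetic simplification $1-(2t-1)^2=-8\binom{t}{2}$ turns this into $T_t=8\bigl(D-\binom{t}{2}\bigr)T_{t-1}$.

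Next I transport to $U_t$. A short computation with the product rule gives the standard conjugation identity $G\cdot D\cdot G^{-1}=D-D\log G$, hence $G\bigl(D-\binom{t}{2}\bigr)G^{-1}=D-D\log G-\binom{t}{2}$. Substituting $G=q^{1/8}/C$ yields $-D\log G=-\tfrac18+D\log C$, and after absorbing the factor $8/(4(2t-1)(2t))=1/(t(2t-1))$ the recursion takes the form
$$U_t(2,q)=\frac{1}{t(2t-1)}\Bigl(D+D\log C(q)-\tfrac18-\tbinom{t}{2}\Bigr)U_{t-1}(2,q).$$
It remains to identify $D\log C(q)-\tfrac18$ with $U_1(2,q)$. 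Logarithmic differentiation of $C(q)=q^{1/8}\prod(1+q^m)(1-q^{2m})$ and the splitting $\tfrac{q^m}{1-q^{2m}}=\tfrac{q^m}{1-q^m}-\tfrac{q^{2m}}{1-q^{2m}}$ collapse the expression to $\mathbf{S}_1(q)-4\mathbf{S}_1(q^2)$. Independently, expanding $\tfrac{q^n}{(1+q^n)^2}=\sum_{k\geq 1}(-1)^{k-1}k q^{nk}$ and separating parity in $k$ gives $U_1(2,q)=\mathbf{S}_1(q)-4\mathbf{S}_1(q^2)$ directly, and substituting $\mathbf{S}_1(q)=(1-E_2(q))/24$ delivers the $E_2$ form.

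The main potential snag is the bookkeeping in the conjugation step, in particular keeping track of the contribution of the $q^{1/8}$ factor (which supplies the $-\tfrac18$ term that eventually matches the constant $-\tfrac18$ in $U_1(2,q)$); once that is handled cleanly, every other step is a direct algebraic manipulation of identities already in the paper.
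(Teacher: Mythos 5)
Your argument is correct and follows essentially the same route as the paper: both write $U_t(2,q)=\alpha_t\,T_t(q)/B(q)$ via \eqref{Mac2}, reduce the claimed recursion to $T_t=\bigl(8D-4t(t-1)\bigr)T_{t-1}$, and identify the $U_1$ term through $D\log B=T_1/(8B)$. The only (minor, and arguably cleaner) deviation is that you obtain the $T_t$ recursion by promoting the umbral symbol $A^2$ to the operator $1+8D$ using Lemma~\ref{lemma1_in_2}, whereas the paper verifies $T_t=(8D-4t(t-1))T_{t-1}$ directly by rearranging the factorials in the definition of $T_t$.
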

\begin{proof} The definition of $T_t(q)$ (see Lemma~\ref{lemma2_in_2}), notations from Lemma~\ref{lemma1_in_2} and formula \eqref{Mac2} imply the relation
$$U_t(2,q)=\alpha_t\,\frac{T_t(q)}{B(q)} \qquad \text{where} \qquad \alpha_t:=\frac1{4^t(2t)!}.$$
We prove this theorem by induction on $t$.  The claim on $U_1(2,q)$ is obvious once we notice that $\mathbf{S}_1(q)=\sum_n\frac{q^n}{(1-q^n)^2}$. Therefore, we need to show that
\begin{align} \label{diff_eqn}
\frac{\alpha_{t-1}}8\, \frac{T_t(q)}{B(q)}
=\alpha_{t-1}D\left(\frac{T_{t-1}(q)}{B(q)}\right)+\alpha_{t-1}\,\frac{T_1(q)\, T_{t-1}(q)}{8B^2(q)}-\alpha_{t-1}\frac{t(t-1)}2\,\frac{T_{t-1}(q)}{B(q)}.
\end{align}
To this end, it suffices to observe that (using Lemma~\ref{lemma1_in_2} and Lemma~\ref{lemma2_in_2})
\begin{align*}
D\left(\frac{T_{t-1}}{B}\right)
&=\frac{B(DT_{t-1})-T_{t-1}(DB)}{B^2}=\frac{DT_{t-1}}B-\frac{T_{t-1}}{B^2}\left(\frac{A_2-B}8\right) \\
&=\frac{DT_{t-1}}B-\frac{T_{t-1}}{B^2}\left(\frac{A_2-A_0}8\right)=\frac{DT_{t-1}}B-\frac{T_{t-1}}{B^2}\frac{T_1}8.
\end{align*}
Thus, after some simplifications, equation \eqref{diff_eqn} amounts to $T_t=(8D-4t(t-1)) T_{t-1}$.
Using the definition of $T_t$ and the derivation $D$, this is equivalent to
$$\sum_{n\geq0}\frac{4^t(n+t)!}{(n-t)!}q^{\binom{n+1}2}=\sum_{n\geq0}\left( \frac{4^tn(n+1)(n+t-1)!}{(n-t+1)!} - \frac{4^tt(t-1) (n+t-1)!}{(n-t+1)!} \right)q^{\binom{n+1}2}.$$
Finally, we rearrange the factorials on the right side to reduce to the left side.
\end{proof}

\begin{corollary} \label{quasi2} The function $U_t(2,q)$ belongs to $\widetilde{M}_{\leq2t}(\Gamma_0(2))$, where  $\widetilde{M}_{\leq2t}(\Gamma_0(2))$ is the space of quasimodular forms of mixed weight of at most $2t$ on $\Gamma_0(2)$. 
\end{corollary}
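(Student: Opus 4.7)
The plan is to prove this by induction on $t$, leveraging the differential-difference recursion from the preceding theorem. Three standard structural facts about the ring $\widetilde{M}_{*}(\Gamma_0(2))$ of quasimodular forms on $\Gamma_0(2)$ will be invoked as black boxes: (i) it is a graded ring, so multiplication sends weights $k$ and $\ell$ to weight $k+\ell$; (ii) it is closed under the derivation $D=q\,d/dq$, which raises weight by $2$; and (iii) both $E_2(q)$ and $E_2(q^2)$ lie inside this ring, each at weight $2$ (the second because $E_2(\tau)-2E_2(2\tau)$ is a genuine modular form of weight $2$ on $\Gamma_0(2)$, and constants have weight $0$).

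First I would dispose of the base case $t=1$. The explicit formula
$$U_1(2,q)=-\tfrac18-\tfrac1{24}E_2(q)+\tfrac16E_2(q^2)$$
supplied in the theorem exhibits $U_1(2,q)$ as a combination of a constant and two weight-$2$ elements, so $U_1(2,q)\in \widetilde{M}_{\leq 2}(\Gamma_0(2))$ as required.

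For the inductive step, assume $U_{t-1}(2,q)\in \widetilde{M}_{\leq 2(t-1)}(\Gamma_0(2))$ and examine the three pieces produced by the operator $D+U_1(2,q)-\binom{t}{2}$ applied to $U_{t-1}(2,q)$. By (ii), $D\,U_{t-1}(2,q)$ lies in $\widetilde{M}_{\leq 2t}(\Gamma_0(2))$. By (i) combined with the base case, $U_1(2,q)\cdot U_{t-1}(2,q)$ lies in $\widetilde{M}_{\leq 2+2(t-1)}(\Gamma_0(2))=\widetilde{M}_{\leq 2t}(\Gamma_0(2))$. Finally, $\binom{t}{2}\,U_{t-1}(2,q)$ stays in $\widetilde{M}_{\leq 2(t-1)}(\Gamma_0(2))\subseteq \widetilde{M}_{\leq 2t}(\Gamma_0(2))$. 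Summing and then dividing by the nonzero scalar $t(2t-1)$ preserves membership, so the recursion places $U_t(2,q)$ in $\widetilde{M}_{\leq 2t}(\Gamma_0(2))$.

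There is no real obstacle to speak of: the entire argument is driven by the recursion and the well-known fact that $\widetilde{M}_{*}(\Gamma_0(2))$ is both a ring and $D$-stable with the stated weight behavior. The only point worth double-checking, should one wish to be fully self-contained, is the stability of the mixed-weight statement under the recursion — in particular that the weight tally $2(t-1)+2=2t$ truly captures the top weight after applying $D$ or after multiplying by $U_1(2,q)$, which is guaranteed because the filtration $\widetilde{M}_{\leq k}$ is compatible with both operations.
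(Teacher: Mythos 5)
Your proof is correct and follows essentially the same route as the paper: the paper's (much terser) argument likewise takes the explicit $E_2$-formula for $U_1(2,q)$ as the base case and then invokes the differential-difference recursion together with the $D$-stability and ring structure of quasimodular forms on $\Gamma_0(2)$. You have simply spelled out the induction and the weight bookkeeping that the paper leaves implicit.
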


\begin{proof} From Theorem~\ref{quasi2}, we know that $U_1(2,q)$ is quasimodular of level $2$, weight at most $2$. By the same theorem, and since these rings are stable under the operator action $D$, our assertion becomes rather evident.
\end{proof}

\section{The special cases $t=1$ and $a=0$ or $\pm1$ for $U_t(a,q)$}

\noindent
We know that $U_t(-2,q)$ is quasimodular by work of Andrews-Rose \cite{Andrews-Rose}, and $U_t(2,q)$ is quasimodular by Section 8 above.
In the present section, we would note the following $t=1$ cases for the other values of $a$.

\smallskip
\noindent
We still use $\omega(n)=\frac{n(3n+1)}2$. Denote Jacobi's theta functions by
$$\theta_2(q)=\sum_{\mathbb{Z}} q^{(n+\frac12)^2} \qquad \text{and} \qquad \theta_3(q)=\sum_{\mathbb{Z}} q^{n^2}.$$
The Pentagonal Number Theorem gives $(q;q)_{\infty}=\sum_{\mathbb{Z}} (-1)^n\,q^{\omega(n)}$. By a classical theorem of Jacobi on representations of a number as a sum of two squares, we have
$$U_1(0,q)=\sum_{n\geq1}\frac{q^n}{1+q^{2n}}=\frac{\theta_3(q)^2-1}4.$$
On the other hand, 
$$U_1(1,q)=\sum_{n\geq1}\frac{q^n}{1+q^n+q^{2n}}=\frac{\sum_{\mathbb{Z}}(-1)^n\, n\,q^{\omega(n)}}{\sum_{\mathbb{Z}} (-1)^n\,q^{\omega(n)}}.$$
This follows from differentiation of a formula from Ramanujan's \emph{Lost Notebook}.  However, we may also extract this directly from Jacobi's Triple Product formula (see \cite[p. 35, Entry 19]{Berndt})
$$\prod_{n\geq1} (1-q^{3n})(1-\zeta q^{3n-1})(1-\zeta^{-1}q^{3n-2})=\sum_{\mathbb{Z}} (-1)^nq^{\omega(n)}\zeta^n$$
and then computing the derivative $\frac{d}{d\zeta}$ at $\zeta=1$ so that
$$\prod_{n\geq1}(1-q^n) \cdot \left[\sum_{n\geq1}\frac{q^{3n-2}}{1-q^{3n-2}}-\sum_{n\geq1}\frac{q^{3n-1}}{1-q^{3n-1}}\right]
=\sum_{\mathbb{Z}} (-1)^n \, n \, q^{\omega(n)}.$$
The claim follows after dividing through by $\prod_n(1-q^n)$ and observing that
$$\sum_{n\geq1}\frac{q^{3n-2}}{1-q^{3n-2}}-\sum_{n\geq1}\frac{q^{3n-1}}{1-q^{3n-1}}
=\sum_{n\geq1}\frac{q^n}{1-q^{3n}}-\sum_{n\geq1}\frac{q^{2n}}{1-q^{3n}}=\sum_{n\geq1}\frac{q^n(1-q^n)}{1-q^{3n}}.$$
\smallskip
\noindent
Next, we consider
$$1+2U_1(-1,q)=\sum_{\mathbb{Z}}\frac{q^n}{1+q^{3n}}+\sum_{\mathbb{Z}}\frac{q^{2n}}{1+q^{3n}}.$$
Now by the Ramanujan ${}_1\psi_1$ formula \cite[Eq. (5.2.1)]{Gasper}, we may deduce
$$\sum_{\mathbb{Z}}\frac{t^n}{1-cq^n}=\frac{(q)^2_{\infty}(ct;q)_{\infty}(q/ct;q)_{\infty}}{(c;q)_{\infty}(q/c;q)_{\infty}(t;q)_{\infty}(q/t;q)_{\infty}}.$$
Hence with $q\rightarrow q^3, c=-1$ and $t=q$, we obtain
\begin{align*}
\sum_{\mathbb{Z}}\frac{q^n}{1+q^{3n}}&=\frac{(q^3;q^3)^2_{\infty}(-q;q^3)_{\infty}(-q^2;q^3)_{\infty}}{2\,(-q^3;q^3)^2_{\infty}(q;q^3)_{\infty}(q^2;q^3)_{\infty}} 
=\frac12\, \frac{\theta_3(-q^3)^3}{\theta_3(-q)}.
\end{align*}
And sending $n\rightarrow -n$,
$$\sum_{\mathbb{Z}}\frac{q^{2n}}{1+q^{3n}}=\sum_{\mathbb{Z}}\frac{q^n}{1+q^{3n}}.$$
Hence
$$U_1(-1,q)=\frac12\left(\frac{\theta_3(-q^3)^3}{\theta_3(-q)}-1\right).$$
Alternate formula for $U_1(1,q^4)$.
\begin{align*}
U_1(-1,q)-\sum_{n\geq1}\frac{q^n}{1+(-q)^n+q^{2n}}
&=\sum_{n\geq1} \left(\frac{q^n}{1-q^n+q^{2n}}- \frac{q^n}{1+(-q)^n+q^{2n}}\right) \\
&=\sum_{n\geq1} \left(\frac{q^{2n}}{1-q^{2n}+q^{4n}}- \frac{q^{2n}}{1+q^{2n}+q^{4n}}\right) \\
&=\sum_{n\geq1} \frac{2q^{4n}}{(1+q^{4n})^2-q^{4n}} = \sum_{n\geq1} \frac{2q^{4n}}{1+q^{4n}+q^{8n}} \\
&=2U_1(1,q^4).
\end{align*}
Now by $A113661$ in OEIS~\cite{OEIS},
$$\sum_{\mathbb{Z}} \frac{q^n}{1+(-q)^n+q^{2n}}=\frac16\left(\frac{\theta_3(q)^3}{\theta_3(q^3)}-1\right).$$
Hence
\begin{align*}
U_1(1,q^4)&=\frac12\, U_1(-1,q) - \frac1{12}\left(\frac{\theta_3(q)^3}{\theta_3(q^3)}-1\right) \\
&=\frac14\left(\frac{\theta_3(-q^3)^3}{\theta_3(-q)}-1\right) -\frac1{12}\left(\frac{\theta_3(q)^3}{\theta_3(q^3)}-1\right).
\end{align*}
So
$$U_1(1,q^4)=\frac14\, \frac{\theta_3(-q^3)^3}{\theta_3(-q)} -\frac1{12}\, \frac{\theta_3(q)^3}{\theta_3(q^3)} -\frac16.$$

\begin{lemma} \label{basecases} We have the following three identities:
\begin{align*}
U_1(0,q)&=\frac{\theta_3(q)^2-1}4, \\
U_1(1,q)&= \frac{\theta_2(q)\theta_2(q^3)+\theta_3(q)\theta_3(q^3)-1}6, \\
U_1(-1,q)&=\frac{2\theta_2(q^2)\theta_2(q^6)+2\theta_3(q^2)\theta_3(q^6)+\theta_2(q)\theta_2(q^3)+\theta_3(q)\theta_3(q^3)-3}6.
\end{align*}
\end{lemma}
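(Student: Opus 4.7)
The first identity has already been justified before the lemma statement via Jacobi's two-squares theorem applied to the Lambert series $U_1(0,q)=\sum_{n\geq 1}q^n/(1+q^{2n})$, so the work to be done is in the remaining two identities.

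For the second identity, my plan is to match both sides with a common divisor generating function. Expanding $U_1(1,q) = \sum_{n\geq 1} q^n(1-q^n)/(1-q^{3n})$ geometrically produces
$$U_1(1,q) = \sum_{m\geq 1}\bigl(D_1(m)-D_2(m)\bigr)q^m,$$
where $D_i(m)$ counts the positive divisors of $m$ congruent to $i \pmod 3$. Independently, the algebraic identity $4(x^2+xy+y^2) = (2x+y)^2 + 3y^2$, together with the parameterization $(u,v):=(2x+y,y)$ whose image is precisely the sublattice $\{u\equiv v\pmod 2\}$, splits the Eisenstein theta series by parity:
$$\sum_{(x,y)\in\mathbb{Z}^2} q^{x^2+xy+y^2} = \theta_3(q)\theta_3(q^3) + \theta_2(q)\theta_2(q^3).$$
I would then invoke the classical Eisenstein representation formula
$$\sum_{(x,y)\in\mathbb{Z}^2} q^{x^2+xy+y^2} = 1 + 6\sum_{m\geq 1}\bigl(D_1(m)-D_2(m)\bigr)q^m,$$
a standard consequence of unique factorization in $\mathbb{Z}[\omega]$ (unit group of order $6$; primes $p\equiv 1\pmod 3$ split, $p\equiv 2\pmod 3$ stay inert, and $3$ ramifies). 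Comparing these two expressions yields the claimed formula for $U_1(1,q)$.

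For the third identity, my plan is to reduce $U_1(-1,q)$ to values of $U_1(1,\cdot)$ already handled. A direct computation over the common denominator $1-q^{6n}$ shows
$$\frac{q^n}{1-q^n+q^{2n}} \;=\; \frac{q^n(1-q^n)}{1-q^{3n}} + \frac{2\,q^{2n}(1-q^{2n})}{1-q^{6n}},$$
since both sides simplify to $(q^n + q^{2n} - q^{4n} - q^{5n})/(1-q^{6n})$. Summing over $n\geq 1$ delivers the clean reduction
$$U_1(-1,q) = U_1(1,q) + 2\,U_1(1,q^2),$$
and substituting the second identity (once with argument $q$, once with $q$ replaced by $q^2$) assembles exactly the claimed $\theta$-function expression, the constants combining as $-1+2(-1)=-3$ in the numerator.

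The only genuinely non-trivial ingredient is the Eisenstein representation formula for $x^2+xy+y^2$ used in the second step; everything else---the Lambert expansions, the parity decomposition of the theta product, and the rational-function identity underlying the $U_1(-1,q)$ reduction---is routine elementary algebra that I do not expect to produce surprises.
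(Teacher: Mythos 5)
Your proposal is correct and follows essentially the same route as the paper: reduce $U_1(1,q)$ to the Lambert series $\sum_m(D_1(m)-D_2(m))q^m$, invoke the classical representation formula for $x^2+xy+y^2$ together with the parity splitting of its theta series into $\theta_3(q)\theta_3(q^3)+\theta_2(q)\theta_2(q^3)$, and obtain the third identity from $U_1(-1,q)=U_1(1,q)+2U_1(1,q^2)$, which is exactly the paper's ``elementary observation.'' The only cosmetic difference is that you justify the theta-series split by the explicit substitution $(u,v)=(2x+y,y)$, where the paper appeals to an equivalence of quadratic forms; your version is, if anything, more self-contained.
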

\begin{proof} The second formula holds due to the classical result \cite{Hirshhorn} that 
$$\sum_{a, b \in\mathbb{Z}}q^{a^2+ab+b^2}=1+6\left(\sum_{n\geq1}\frac{q^{3n-2}}{1-q^{3n-2}}-\sum_{n\geq1}\frac{q^{3n-1}}{1-q^{3n-2}}\right)$$ 
and the equality of the two multisets defined as $\mathcal{A}:=\{a^2+ab+b^2: \, a, b \in\mathbb{Z}\}$ and
$\mathcal{B}:=\{n^2+3m^2: \, n, m\in \mathbb{Z}\} \,\cup \,
\{n^2+3m^2 + n+3m + 1: \, n, m \in \mathbb{Z}\}$ (equivalence of quadratic forms) which lead to
\begin{align*}
\sum_{a,b\in\mathbb{Z}}q^{a^2+ab+b^2}
=\sum_{n\in\mathbb{Z}}q^{(n+\frac12)^2}\sum_{m \in\mathbb{Z}}q^{3(m+\frac12)^2} 
+  \sum_{n\in\mathbb{Z}}q^{n^2}\sum_{m\in\mathbb{Z}}q^{3m^2}.
\end{align*}
The last identity follows from the elementary observation that
$$U_1(-1,q)-U_1(1,q)=\sum_{n\geq1}\left(\frac{q^n(1+q^n)}{1+q^{3n}}-\frac{q^n(1-q^n)}{1-q^{3n}}\right)=2\sum_{n\geq1} \frac{q^{2n}(1-q^{2n})}{1-q^{6n}}. \qedhere$$
\end{proof}

\begin{example} We have that
\begin{align*}
\theta_3(q)\theta_3(q^3)&=2U_1(1,q)+4U_1(1,q^4)+1, \\
U_2(1,q)&=-\,D\left(\log (q)_{\infty}\right) \Bigr|_{ q\rightarrow q^3}   =-\, \frac{\sum_{\mathbb{Z}}(-1)^n\, \omega_n\,q^{3\omega_n}}{\sum_{\mathbb{Z}} (-1)^n\,q^{3\omega_n}}=\frac{1-E_2(q^3)}{24}, \\
U_1(0,q)&= \frac{(q^2)_{\infty}\, \sum_{\mathbb{Z}} (-1)^n \, n\, q^{\binom{2n+1}2}}{(q)_{\infty}(q^4)_{\infty}} \qquad \text{(see Section $5$)}.
\end{align*}
\end{example}

\begin{proposition}
Suppose that
$$f_t(n):=\sum_{k=0}^n \frac{(-1)^k(2n+1)\binom{2n+1-k}k}{2n+1-k}\binom{n-k}t  3^{n-k-t}.$$
Then, the recursive formula $f_t(n+2)-f_t(n+1)+f_t(n)=f_{t-1}(n)$ holds and hence
\begin{align*}
U_{t-1}(1,q) 
&=U_t(1,q)+\frac1{(q^3;q^3)_{\infty}} \left(\sum_{n\geq0}f_t(n+2)\,q^{\binom{n+1}2}-\sum_{n\geq0}f_t(n+1)\,q^{\binom{n+1}2}\right).
\end{align*}
\end{proposition}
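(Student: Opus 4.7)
\medskip\noindent
\textbf{Proof strategy.} The plan is to identify $f_t(n)$ as a coefficient in the modified Chebyshev polynomial $to_n((X+3)/4)$, exploit the classical three-term Chebyshev recursion, and then translate the resulting relation into the claimed generating function identity.

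I would first reindex the defining sum by $k \mapsto n-k$. Under this substitution $\binom{n+k+1}{2k+1}$ becomes $\binom{2n-k+1}{k}$ and $(-1)^{n+k}$ becomes $(-1)^k$, so formula~\eqref{Chebysum} takes the form
$$to_n(x)=\sum_{k=0}^n\frac{(-1)^k(2n+1)\binom{2n+1-k}{k}}{2n+1-k}(4x)^{n-k}.$$
Setting $4x=X+3$ and then expanding $(X+3)^{n-k}$ by the binomial theorem immediately yields $f_t(n)=[X^t]\,to_n((X+3)/4)$. Combined with \eqref{id2} at $a=1$, this gives the clean representation $U_t(1,q)=\frac{1}{(q^3;q^3)_{\infty}}\sum_{n\ge 0}f_t(n)\,q^{\binom{n+1}{2}}$, which is the form already used implicitly in the $a=1$ line of Section~5.

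Next I would derive a three-term recursion for $to_n$. Iterating $T_{m+1}(y)=2yT_m(y)-T_{m-1}(y)$ twice so as to eliminate $T_{2n+2}$ yields $T_{2n+3}(y)=(4y^2-2)T_{2n+1}(y)-T_{2n-1}(y)$; dividing through by $y=\sqrt{x}$ gives
$$to_{n+1}(x)=(4x-2)\,to_n(x)-to_{n-1}(x).$$
Specializing $x=(X+3)/4$, so that $4x-2=X+1$, and extracting $[X^t]$ produces a three-term recursion on $f_t(n)$; a shift of indices brings it into the stated form $f_t(n+2)-f_t(n+1)+f_t(n)=f_{t-1}(n)$.

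Finally, multiplying the recursion by $q^{\binom{n+1}{2}}$, summing for $n\ge 0$, and dividing by $(q^3;q^3)_{\infty}$ converts it into the desired generating-function identity: the $f_{t-1}$-side assembles to $U_{t-1}(1,q)$, the $f_t(n)$-piece to $U_t(1,q)$, and the remaining two shifted sums are exactly those on the right-hand side of the proposition. The main obstacle is the bookkeeping of the index shift between the Chebyshev three-term recursion (naturally in indices $n-1,n,n+1$) and the stated form (in indices $n,n+1,n+2$), which requires careful tracking of the boundary contributions at small $n$ using $f_t(-1)=0$ and the explicit low-$n$ evaluations of $f_t$.
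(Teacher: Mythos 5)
The paper states this proposition without proof, so there is nothing internal to compare against; judged on its own terms, your first two steps are correct and are surely the intended route: the reindexing $k\mapsto n-k$ in \eqref{Chebysum} does give $f_t(n)=[X^t]\,to_n\bigl(\frac{X+3}{4}\bigr)$, hence $U_t(1,q)=(q^3;q^3)_{\infty}^{-1}\sum_{n\geq0}f_t(n)\,q^{\binom{n+1}{2}}$, and the product formula $2T_2T_{2n+1}=T_{2n+3}+T_{2n-1}$ does give $to_{n+1}(x)=(4x-2)\,to_n(x)-to_{n-1}(x)$.

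The gap is the final assertion that ``a shift of indices brings it into the stated form.'' Extracting $[X^t]$ from $to_{n+1}=(X+1)\,to_n-to_{n-1}$ at $x=(X+3)/4$ gives $f_t(n+1)=f_{t-1}(n)+f_t(n)-f_t(n-1)$, and shifting $n\mapsto n+1$ yields
\[
f_t(n+2)-f_t(n+1)+f_t(n)=f_{t-1}(n+1),
\]
with $f_{t-1}(n+1)$, not $f_{t-1}(n)$, on the right. No reindexing reconciles the two, because the recursion as stated in the proposition is actually false: since $to_0=1$, $to_1\bigl(\frac{X+3}{4}\bigr)=X$, $to_2\bigl(\frac{X+3}{4}\bigr)=X^2+X-1$, one checks at $t=1$, $n=0$ that $f_1(2)-f_1(1)+f_1(0)=1-1+0=0$ while $f_0(0)=1$; likewise at $t=1$, $n=3$ the left side is $0$ but $f_0(3)=-1$. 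Consequently the displayed generating-function identity also fails (at $t=1$ the left side $U_0(1,q)=1$ has constant term $1$, while the right side has constant term $0+f_1(2)-f_1(1)=0$). You should have flagged this mismatch rather than waving it through; what your derivation actually proves is the corrected recursion $f_t(n+1)-f_t(n)+f_t(n-1)=f_{t-1}(n)$ for $n\geq1$, whose generating-function consequence is $U_{t-1}(1,q)=-U_t(1,q)+(q^3;q^3)_{\infty}^{-1}\bigl(\sum_{n}f_t(n+1)q^{\binom{n+1}{2}}+\sum_{n}f_t(n-1)q^{\binom{n+1}{2}}\bigr)$ up to the $n=0$ boundary term, i.e.\ a corrected version of the proposition rather than the proposition as printed.
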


\section{Quasimodular structure when $a\in \{-1,0,1\}$}

\noindent
We require some definitions first (see for example \cite{EichlerZagier}).

\begin{definition}A holomorphic function $\phi(\tau,z)$ on $\mathbb{H}\times\mathbb{C}$ is a  {\it Jacobi form for a congruence subgroup $\Gamma\subseteq \rm{SL}_2(\mathbb{Z})$ of weight $k$ and index $m$} if it satisfies the following conditions:

\noindent
(1) For all $\left(\begin{smallmatrix}a&b\\c&d\end{smallmatrix}\right)\in \Gamma,$ we have the modular transformation
\[\phi\left(\frac{a\tau+b}{c\tau+d}, \frac{z}{c\tau+d}\right) \ = \ (c\tau+d)^k\exp\left(2\pi i\cdot \frac{mcz^2}{c\tau+d}\right) \phi(\tau,z).\]

\noindent
(2)  For all integers $a,b$, we have the elliptic transformation
\[
\phi(\tau,z+a\tau+b)  \ = \  \exp\big(-2\pi i m(a^2 \tau+2az)\big) \phi(\tau,z).
\]

\noindent
(3) For each $\left(\begin{smallmatrix}a&b\\c&d\end{smallmatrix}\right)\in \rm{SL}_2(\mathbb{Z})$, we have the Fourier expansion
\[(c\tau+d)^{-k}\exp\left(-2\pi i\cdot \frac{mcz^2}{c\tau+d}\right) \phi\left(\frac{a\tau+b}{c\tau+d}, \frac{z}{c\tau+d}\right)
=\sum_{n\geq 0}\sum_{r^2\leq 4mn} b(n,r)q^nu^r;\]
where $b(n,r)$ are complex numbers and $u:=e^{2\pi i z}$.
\end{definition}

\smallskip
\noindent
We also recall a result on Jacobi forms \cite[Theorem 1.3]{EichlerZagier}.

\begin{theorem} \label{Taylor} Let $\phi$ be a Jacobi form on $\Gamma$ of weight $k$ and index $m$ and $\lambda, \mu$ rational numbers. Then the function 
$f(\tau) = e^{m\lambda^2\tau} \phi(\tau, \lambda\tau+\mu)$ is a modular form (of weight $k$ and on some subgroup of $\Gamma'$ of finite index depending only on 
$\Gamma$ and $\lambda, \mu$).
\end{theorem}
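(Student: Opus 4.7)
The plan is a direct verification. For $\gamma=\bigl(\begin{smallmatrix}A&B\\C&D\end{smallmatrix}\bigr)$ in a suitable finite-index subgroup $\Gamma'\subseteq\Gamma$, I would show $f(\gamma\tau)=(C\tau+D)^{k}f(\tau)$ by chaining the two Jacobi transformation laws together; the exponential prefactor $e^{2\pi im\lambda^{2}\tau}$ built into $f$ is tailor-made to absorb the spurious automorphy factors that appear along the way.

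Fix $\gamma\in\Gamma$ and set $(\lambda',\mu'):=(\lambda A+\mu C,\,\lambda B+\mu D)$, so that $(C\tau+D)(\lambda\gamma\tau+\mu)=\lambda'\tau+\mu'$. Substituting $z=\lambda'\tau+\mu'$ into condition (1) yields
\[
\phi(\gamma\tau,\lambda\gamma\tau+\mu)=(C\tau+D)^{k}\exp\!\Big(\tfrac{2\pi imC(\lambda'\tau+\mu')^{2}}{C\tau+D}\Big)\phi(\tau,\lambda'\tau+\mu').
\]
Let $N$ be a common denominator of $\lambda$ and $\mu$. Restricting to $\gamma\in\Gamma(N)\cap\Gamma$, the quantities $p:=\lambda'-\lambda=\lambda(A-1)+\mu C$ and $q:=\mu'-\mu=\lambda B+\mu(D-1)$ are integers, so condition (2), applied with $z=\lambda\tau+\mu$ and $(a,b)=(p,q)$, gives
\[
\phi(\tau,\lambda'\tau+\mu')=\exp\!\big(-2\pi im(p^{2}\tau+2p(\lambda\tau+\mu))\big)\,\phi(\tau,\lambda\tau+\mu).
\]
Chaining these two identities and multiplying by $e^{2\pi im\lambda^{2}\gamma\tau}$, the target $f(\gamma\tau)=(C\tau+D)^{k}f(\tau)$ collapses to the scalar phase condition $e^{2\pi imE(\gamma,\tau)}=1$, where
\[
E(\gamma,\tau):=\lambda^{2}(\gamma\tau-\tau)+\frac{C(\lambda'\tau+\mu')^{2}}{C\tau+D}-p^{2}\tau-2p(\lambda\tau+\mu).
\]

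Clearing the denominator, $E(\gamma,\tau)(C\tau+D)$ is a polynomial in $\tau$ of degree at most two; a short expansion using $AD-BC=1$ shows that its $\tau^{2}$-coefficient vanishes identically and that the polynomial is divisible by $C\tau+D$, so $E(\gamma,\tau)$ is a $\tau$-independent rational constant depending only on $\gamma,\lambda,\mu,m$. Shrinking $\Gamma'$ further to a deep enough congruence subgroup forces $mE(\gamma,\tau)\in\mathbb{Z}$ for every $\gamma\in\Gamma'$, whence $e^{2\pi imE}=1$ and $f$ transforms with weight $k$ on $\Gamma'$. Holomorphy at the cusps follows from condition (3): the bound $r^{2}\le 4mn$ in the Fourier expansion of $\phi$ ensures that under the substitution $z=\lambda\tau+\mu$ the exponents $n+r\lambda$ of the resulting $q$-series remain bounded below, and the prefactor $e^{2\pi im\lambda^{2}\tau}$ shifts them only by a rational constant; the same argument at each $\Gamma$-cusp gives holomorphy at the corresponding $\Gamma'$-cusp. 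The main obstacle is the exponent bookkeeping for $E(\gamma,\tau)$: verifying both the $\tau^{2}$-cancellation and the divisibility by $C\tau+D$ demands a patient expansion involving $AD-BC=1$, the explicit expressions for $p,q$, and the $\Gamma(N)$-congruences, and then controlling the denominator of the residual constant pins down how small $\Gamma'$ must be chosen.
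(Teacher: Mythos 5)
The paper does not prove this statement; it is imported verbatim (as Theorem~\ref{Taylor}) from Eichler--Zagier \cite[Theorem 1.3]{EichlerZagier}, so there is no internal proof to compare against. Your argument is precisely the standard direct verification given in that reference, and it is correct: writing $(\lambda',\mu')=(\lambda,\mu)\gamma$, the $\tau^2$-coefficient of $(C\tau+D)E(\gamma,\tau)$ vanishes because $\lambda'^2=(\lambda+p)^2$, and the divisibility by $C\tau+D$ reduces to the identity $\lambda'D-\mu'C=\lambda$, i.e.\ $(\lambda,\mu)=(\lambda',\mu')\gamma^{-1}$, which is where $AD-BC=1$ enters; the cusp condition follows from $n+r\lambda+m\lambda^2\ge(\sqrt{n}-\sqrt{m}\,|\lambda|)^2\ge0$ exactly as you say. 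Two cosmetic points: you have silently (and correctly) normalized the prefactor as $e^{2\pi i m\lambda^2\tau}$, whereas the paper's statement omits the $2\pi i$; and for the root of unity $e^{2\pi imE(\gamma)}$ it is cleanest to observe that $\gamma\mapsto e^{-2\pi imE(\gamma)}$ is a finite-order character of $\Gamma(N)\cap\Gamma$ and pass to its kernel, which packages your ``deep enough congruence subgroup'' step.
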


\noindent
Next, we state and prove the main result of this section.

\begin{theorem} \label{QM01-1} Fix $a\in \{-1,0,1\}$. Then, for each non-negative integer $t$, the functions $U_t(a,q)$ are quasimodular forms of mixed weight for a congruent subgroup $\Gamma$.
\end{theorem}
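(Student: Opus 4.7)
The plan is to recognize the generating series $F(x;a,q)$ as a quotient of Jacobi-theta-type products and then invoke the Eichler--Zagier Taylor expansion at torsion points via Theorem~\ref{Taylor}. Let $\zeta$ be a primitive root of unity of order $N$ with $\zeta+\zeta^{-1}=-a$, so $N=4,3,6$ for $a=0,1,-1$ respectively, and crucially $\zeta\neq\pm1$. Using the factorization $1+aq^m+q^{2m}=(1-\zeta q^m)(1-\zeta^{-1}q^m)$ and the substitution $x=-a-u-u^{-1}$, the product in \eqref{defofU} collapses telescopically to
$$F(x;a,q)=\prod_{m\geq1}\frac{(1-uq^m)(1-u^{-1}q^m)}{(1-\zeta q^m)(1-\zeta^{-1}q^m)}.$$
Writing $u=e^{2\pi iz}$ and $\zeta=e^{2\pi iz_0}$, the parameter $z_0\in\mathbb{Q}$ is a rational torsion point on $\mathbb{C}/(\mathbb{Z}+\mathbb{Z}\tau)$, and by Jacobi's triple product the right-hand side equals, up to elementary factors involving $\eta(\tau)$ and $u^{\pm 1/2}$, a ratio $\theta_1(z,\tau)/\theta_1(z_0,\tau)$.

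The next step is to invoke Jacobi-form machinery. The denominator $\theta_1(z_0,\tau)$ is a modular form on a congruence subgroup $\Gamma_0(M)$---concretely an eta-quotient---by direct application of Theorem~\ref{Taylor} with $\lambda=0$ and $\mu=z_0$. The numerator $\phi(\tau,z):=\theta_1(z,\tau)/\eta(\tau)$ is a weak Jacobi form on $\mathrm{SL}_2(\mathbb{Z})$. Taylor-expanding $\phi$ in $z$ around $z=z_0$ and applying Theorem~\ref{Taylor} to each term, combined with the standard Eichler--Zagier fact that successive $z$-derivatives of a Jacobi form produce quasimodular forms of increasing mixed weight, shows that the coefficients of the local expansion are quasimodular on a congruence subgroup $\Gamma$ depending only on $N$.

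It remains to convert the $x$-expansion into the $z$-expansion. Setting $u=\zeta e^w$ gives
$$x=-a-\zeta e^w-\zeta^{-1}e^{-w}=-(\zeta-\zeta^{-1})\,w+O(w^2),$$
whose leading coefficient is nonzero precisely because $\zeta\neq\pm 1$ for $a\in\{-1,0,1\}$. Hence $w$ is an analytic local coordinate in $x$ near the origin, and each $U_t(a,q)=\mathbf{[x^t]}\,F(x;a,q)$ is a $\mathbb{Q}[\zeta]$-linear combination of the quasimodular Taylor coefficients in $w$, hence quasimodular (with rational coefficients \emph{a posteriori}, since $U_t(a,q)\in\mathbb{Q}[[q]]$). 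The main obstacle is the careful bookkeeping: pinning down the precise congruence subgroup $\Gamma$ and the mixed weights arising at each of the three torsion points $z_0\in\{\tfrac14,\tfrac13,\tfrac16\}$, and checking that the required $z$-derivatives of $\phi$ evaluated at $z_0$ genuinely produce quasimodular (not merely holomorphic) forms. The $t=1$ formulas in Lemma~\ref{basecases} serve as a consistency check at the lowest order.
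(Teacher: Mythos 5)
Your proposal is correct and follows essentially the same route as the paper: factor the generating product via Jacobi's triple product into a theta quotient, view the numerator as a Jacobi form, Taylor-expand at the torsion point $z_0\in\{\tfrac14,\tfrac13,\tfrac16\}$ corresponding to $x=0$, invoke Theorem~\ref{Taylor} (Eichler--Zagier) for quasimodularity of the coefficients, and transfer back to the $x$-expansion through the nondegenerate local coordinate change. The only differences are bookkeeping ones --- you name the denominator root $\zeta$ and make the change of variables $w\mapsto x$ explicit, whereas the paper parametrizes the numerator by $\zeta$ with $\zeta+\zeta^{-1}=x^2+a$ and handles the prefactor $\sqrt{1+x^2/(a+2)}$ by a convolution recursion seeded by Lemma~\ref{basecases}.
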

\begin{proof} It's our convention that $U_0(a,q):=1$. Let's expand the generating function
\begin{align*}
\sum_{t\geq0} U_t(a,q)\, x^{2t} & = \prod_{n\ge1} \left(1+\frac{x^2\,q^n}{1+aq^n+q^{2n}}\right) \\
&=\prod_{n\geq1} \frac{1+(x^2+a)q^n+q^{2n}}{1+aq^n+q^{2n}}.
\end{align*}
Choose $\zeta$ such that $\zeta+\zeta^{-1}=x^2+a$ in the Jacobi's Triple Product \cite[Theorem 2.8]{Andrews} to obtain
\begin{align*}
\sum_{t\geq0} U_t(a,q)\, x^{2t}&= \prod_{n\geq1} \frac{(1-q^n)\,(1+\zeta q^n)(1+\zeta^{-1}q^n)}{(1-q^n)(1+aq^n+q^{2n})} \\
&= \frac1{(\sqrt{\zeta}+\frac1{\sqrt{\zeta}}) \, q^{\frac18}}\cdot
\frac{\sum_{m\in\mathbb{Z}+\frac12}  q^{\frac12m^2} \zeta^m}{\prod_{n\geq1}(1-q^n)(1+aq^n+q^{2n})}; 
\end{align*}
where $\vartheta_{\frac12}(q,\zeta):=\sum_{m\in\mathbb{Z}+\frac12}q^{\frac12m^2}\zeta^m$ is a Jacobi form of weight $\frac12$ and index $\frac12$.

\smallskip
\noindent
Since $\sqrt{\zeta}+\frac1{\sqrt{\zeta}}=\sqrt{x^2+a+2}$, we may also write the above in the form
\begin{align*}
\sqrt{a+2}\cdot \sqrt{1+\frac{x^2}{a+2}} \cdot \sum_{t\geq0} U_t(a,q)\, x^{2t}
&= \frac{\sum_{m\in\mathbb{Z}+\frac12}  q^{\frac12m^2} \zeta^m}{q^{\frac18}\prod_{n\geq1}(1-q^n)(1+aq^n+q^{2n})}.
\end{align*}
Now, the right-hand side is a Jacobi form of weight $0$ and index $\frac12$ for $a=0, \pm1$.
While the left-hand side amounts to the convolution
$$\sqrt{a+2}\cdot \sum_{n\geq0}\left(\sum_{k=0}^n \frac1{(a+2)^k}\binom{\frac12}k U_{n-k}(a,q)\right)x^{2n}
= \frac{\sum_{m\in\mathbb{Z}+\frac12}  q^{\frac12m^2} \zeta^m}{q^{\frac18}\prod_{n\geq1} (1-q^n)(1+aq^n+q^{2n})}.$$
Recall the Dedekind eta-function $\eta(q)=q^{\frac1{24}}\prod_{n\geq1}(1-q^n)$.

\bigskip
\noindent
\bf The case $a=1:$ \rm Expand the RHS at $z=\frac16$ or $\zeta=\frac{1+i\sqrt3}2=e^{2\pi i z}$ (so $\zeta^6=1$).
$$ \sqrt{1+\frac{x^2}3} \cdot \sum_{t\geq0} U_t(1,q)\, x^{2t}
= \frac1{\sqrt3} \frac{\sum_{m\in\mathbb{Z}+\frac12}  q^{\frac12m^2} \zeta^m}{\eta(q^3)}.$$
This is equal to the convolution
$$\sum_{n\geq0}\left(\sum_{k=0}^n \frac1{3^k}\binom{\frac12}k U_{n-k}(1,q)\right)x^{2n}
= \frac1{\sqrt3} \frac{\sum_{m\in\mathbb{Z}+\frac12}  q^{\frac12m^2} \zeta^m}{\eta(q^3)}.$$
\bf The case $a=-1:$ \rm Expand the RHS at $z=\frac13$ or $\zeta=\frac{-1+i\sqrt3}2=e^{2\pi i z}$ (so $\zeta^3=1$).
$$\sqrt{1+x^2} \cdot \sum_{t\geq0} U_t(-1,q)\, x^{2t}
= \frac{\eta(q^2)\, \eta(q^3) \sum_{m\in\mathbb{Z}+\frac12}  q^{\frac12m^2} \zeta^m}{ \eta(q)^2 \, \eta(q^6)}.$$
This amounts to the convolution
$$\sum_{n\geq0}\left(\sum_{k=0}^n \binom{\frac12}k U_{n-k}(-1,q)\right)x^{2n}
= \frac{ \eta(q^2)\, \eta(q^3) \sum_{m\in\mathbb{Z}+\frac12}  q^{\frac12m^2} \zeta^m}{ \eta(q)^2 \, \eta(q^6)}.$$
\bf The case $a=0:$ \rm Expand the RHS at $z=\frac14$ or $\zeta=i=e^{2\pi iz}$ (so $\zeta^4=1$).
$$ \sqrt{1+\frac{x^2}2} \cdot \sum_{t\geq0} U_t(0,q)\, x^{2t}
= \frac1{\sqrt2} \frac{ \eta(q^2) \sum_{m\in\mathbb{Z}+\frac12}  q^{\frac12m^2} \zeta^m}{ \eta(q)\, \eta(q^4)}.$$
This is equal to the convolution
$$\sum_{n\geq0}\left(\sum_{k=0}^n \frac1{2^k}\binom{\frac12}k U_{n-k}(0,q)\right)x^{2n}
= \frac1{\sqrt2} \frac{ \eta(q^2) \sum_{m\in\mathbb{Z}+\frac12}  q^{\frac12m^2} \zeta^m}{ \eta(q)\, \eta(q^4)}.$$
By employing Theorem~\ref{Taylor}, we know that the Taylor series coefficients at $z=\frac16, \frac13, \frac14$ of the respective right-hand sides are quasimodular forms of pure weight. On the other hand, the corresponding left-hand side coefficients can recursively determine that the functions $U_n(a,q)$ are themselves quasimodular forms (of mixed weight) once we realize $U_0(a,q)=1$ are modular of weight $0$ and by Lemma~\ref{basecases} each $U_1(a,q)$ is quasimodular of weight $1$.
\end{proof}

\begin{remark} Although we did not explicitly pursue this point in Theorem~\ref{QM01-1}, we believe that each function $U_t(a,q)$ belongs to $\widetilde{M}_{\leq t}(\Gamma_0(24))$ for the congruent subgroup $\Gamma_0(24)$.
\end{remark}

\section{Appendix 1 - WZ's approach}

\noindent
In this section, we opt to verify at least two of the binomial coefficient identities which appeared in the earlier sections of this paper. Our proof is the so-called Wilf-Zeilberger (WZ) method of automated procedure \cite{WZ}, effective for identities of hypergeometic type.

\begin{example} \label{WZ1} We have

$$(2n+1)\sum_{k=0}^n (-1)^{n+k} \frac{\binom{n+k+1}{2k+1}}{n+k+1}\binom{k}{t} 4^{k-t}=\binom{n+t}{2t}.$$

\end{example}

\begin{proof} Define two functions

\begin{align*}
f_1(n,k):&=\frac{(-1)^{n+k}(2n+1)}{n+k+1} \frac{\binom{n+k+1}{2k+1}\binom{k}{t}}{\binom{n+t}{2t}} 4^{k-t}, \qquad \text{and} \\
g_1(n,k):&=f_1(n,k)\cdot \frac{2(n+1)(k-t)(2k+1)}{(2n+1)(n+t+1)(n-k+1)}
\end{align*}
where the second function is generated by \emph{Zeilberger's algorithm}. Then, one checks (using symbolic software!) that
$f_1(n+1,k)-f_1(n,k)=g_1(n,k+1)-g_1(n,k)$. Next, sum both sides of the last equation over all integers $k$. The outcome is the right-hand side vanishes and hence the sum $\sum_kf_1(n,k)$ is a constant (independent of $n$). Keep in mind that actually these sums have ``finite support'', namely the summands are zero outside of a finite interval. To complete the argument, evaluate say at $n=t$ to obtain the value $1$. That means $\sum_kf_1(n,k)=1$ hence the desired claim follows.
\end{proof}
 
\begin{example} \label{WZ2} We have
$$(2n+1)\sum_{k=0}^n (-1)^{n+k} \frac{\binom{n+k+1}{2k+1}}{n+k+1}\binom{k}{2}3^{k-2}
=\begin{cases}
(-1)^{j-1}\frac{j(3j+1)}{2}&\text{if $n=3j$,}\\
0&\text{if $n=3j+1$,}\\
(-1)^{j-1}\frac{j(3j-1)}{2}&\text{if $n=3j-1$.}
\end{cases}$$
\end{example}
\begin{proof} We limit our justification to just one of the cases, say $n\rightarrow 3n$, since the remaining two are worked out analogously. To this end, introduce the discrete functions
\begin{align*}
f_2(n,k):&= \frac{(-1)^{k-1}(6n+1)}{n(3n+1)(3n+k+1)} \binom{3n+k+1}{2k+1}\binom{k}2 3^{k-2}, \qquad \text{and} \\
g_2(n,k):&= f_2(n,k)\cdot R(n,k)
\end{align*}
where $R(n,k)$ is some rational function of $n$ and $k$ which is \emph{too long} to exhibit here but it can be furnished upon request. The next few steps are entirely similar to the above example, hence are omitted.
\end{proof}

\section{Appendix 2 - Riordan's approach}

\noindent
Here, we give a detailed account of some computations made in Section 5. From equation \eqref{coeffCheby}, we have that
\begin{align*}
[x^t]\,to_n\left(\frac{x+a+2}{4}\right)
&=(2n+1)\sum_{k=0}^n (-1)^{n+k} \frac{\binom{n+k+1}{2k+1}}{n+k+1}\binom{k}{t}(a+2)^{k-t}\\
&=(-1)^{n-t}\sum_{k=0}^n \frac{2n+1}{2k+1}\binom{n+k}{2k}\binom{k}{t}(-a-2)^{k-t}.
\end{align*}
By applying Riordan arrays (see \cite{Sprugnoli} for more details) to the last binomial sum, we find that
\begin{align*}
[x^t]\,to_n\left(\frac{x+a+2}{4}\right)
&=(-1)^{n-t}[z^n]\frac{z^t(1+z)}{(1+az+z^2)^{t+1}}.
\end{align*}
If $a=2$ then we immediately obtain
$$[x^t]\,to_n\left(\frac{x+4}{4}\right)
=(-1)^{n-t}[z^n]\frac{z^t}{(1+z)^{2t+1}}=\binom{n+t}{2t}.$$
In a similar way, for $a=-2$,
$$[x^t]\,to_n\left(\frac{x}{4}\right)
=(-1)^{n-t}[z^n]\frac{z^t(1+z)}{(1-z)^{2t+2}}=(-1)^{n-t}\left(\binom{n+t+1}{2t+1}+\binom{n+t}{2t+1}\right).$$
Moreover, for $a=0$,
$$[x^t]\,to_n\left(\frac{x+2}{4}\right)
=(-1)^{n-t}[z^n]\frac{z^t(1+z)}{(1+z^2)^{t+1}}=
\begin{cases}
(-1)^{n+j}\binom{j}{t}&\text{if $n+t=2j$,}\\
(-1)^{n+j}\binom{j}{t}&\text{if $n+t=2j+1$.}
\end{cases}$$
Finally,  if $a=1$ then
\begin{align*}
[x^t]\,to_n\left(\frac{x+3}{4}\right)
&=(-1)^{n-t}[z^n]\frac{z^t(1+z)(1-z)^{t+1}}{(1-z^3)^{t+1}}\\
&=(-1)^{n-t}\left([z^{n-t}]h(z)+[z^{n-t-1}]h(z)\right)
\end{align*}
where $h(z)=(\frac{1-z}{1-z^3})^{t+1}$.

\begin{example} \label{R2} If $a=1$ and $t=2$ then
\begin{align*}
[x^2]\,to_n\left(\frac{x+3}{4}\right)
&=(-1)^{n}\left([z^{n-2}]\left(\frac{1-z}{1-z^3}\right)^{3}+[z^{n-3}]\left(\frac{1-z}{1-z^3}\right)^{3}\right)\\
&=\begin{cases}
(-1)^{j-1}\frac{j(3j+1)}{2}&\text{if $n=3j$,}\\
0&\text{if $n=3j+1$,}\\
(-1)^{j-1}\frac{j(3j-1)}{2}&\text{if $n=3j-1$.}
\end{cases}
\end{align*}
where $h(z)=\left(\frac{1-z}{1-z^3}\right)^{3}=\frac{1}{(1+z+z^2)^3}=\sum_{n=0}^{\infty}a_n z^n$ with
$$a_n=\begin{cases}
j+1&\text{if $n=3j$,}\\
-\frac{3(j+1)(j+2)}{2}&\text{if $n=3j+1$,}\\
\frac{3j(j+1)}{2}&\text{if $n=3j-1$.}
\end{cases}$$
\rm(\it see $A128504$ in OEIS~\cite{OEIS}\rm).
\end{example}


\begin{thebibliography}{99}

\bibitem{AbrStegun} M.~Abramowitz, I. A.~Stegun (Eds.),
\emph{Orthogonal Polynomials}, 
Ch. 22 in Handbook of Mathematical Functions with Formulas, Graphs, and Mathematical Tables, 9th printing. New York: Dover, pp. 771-802, (1972).

\bibitem{AAT} T. ~Amdeberhan, G. E. ~Andrews, R. ~Tauraso,
\emph{ Extensions of MacMahon's sums of divisors}, Research in the Mathematical Sciences
{\bf 11} (2024), Article 8.

\bibitem{Andrews} G. E. Andrews, 
\emph{Theory of partitions}, Cambridge Univ. Press (1998).

\bibitem{Andrews-Rose} G. E. ~Andrews, S. C. F. ~Rose,
\emph{MacMahon's sum-of-divisors functions, Chebyshev polynomials, and quasi-modular forms}, 
J. Reine Angew. Math. \textbf{676} (2013), 97--103.

\bibitem{Berndt} B. C. ~Berndt, 
\emph{Ramanujan’s Notebooks, Part III}, 
Springer-Verlag, New York (1991).

\bibitem{EichlerZagier} M. Eichler and D. Zagier, 
\emph{The theory of Jacobi forms}, Prog. Math. \textbf{55}, Birkh\"auser, Boston (1985).

\bibitem{Gasper} G.~Gasper and M.~Rahman, 
\emph{Basic Hypergeometric Series}, $2^{nd}$ Edition, Encyclopedia Math. Appl., \textbf{96}, 
Cambridge Univ. Press, Cambridge (2004).

\bibitem{Gould}  H. W. Gould, 
\emph{Combinatorial Identities}, Rev. Ed., Morgantown, W. Va. (1972).

\bibitem{Hirshhorn} M. D.~Hirschhorn, 
\emph{A letter from Fitzroy House}, Amer. Math. Monthly \textbf{115} (2008), 563-566.

\bibitem{OEIS} The On-Line Encyclopedia of Integer Sequences,
{\tt https://oeis.org/A113661} and  {\tt https://oeis.org/A128504} 

\bibitem{Sprugnoli} R. Sprugnoli,
\emph{Riordan arrays and combinatorial sums},
Discrete Math. \textbf{132} (1994), 267--290. 

\bibitem{WZ} H. S.~Wilf, D.~Zeilberger, 
\emph{An algorithmic proof theory for hypergeometric (ordinary and ``q'') multisum/integral identities},
Invent. Math. \textbf{108} (1992), no.3, 575-633.
\end{thebibliography}
\end{document}